\documentclass[11pt,a4paper,reqno]{amsart} 

\usepackage{amsfonts}
\usepackage{amssymb}
\usepackage{amsmath}
\usepackage{amsthm}
\usepackage{cite}
\usepackage{fancyhdr}
\usepackage[colorlinks,linkcolor=blue]{hyperref}
\usepackage{verbatim}

\theoremstyle{plain}
\newtheorem{Thm}{Theorem}[section]
\newtheorem{Pro}{Proposition}[section]
\newtheorem{Def}{Definition}[section]
\newtheorem{Rk}{Remark}[section]
\newtheorem{Lem}{Lemma}[section]
\newtheorem{Ex}{Example}

\theoremstyle{definition}

\newcounter{marnote}

\numberwithin{equation}{section}
\begin{document}
\title[Hessian quotient equations]{Hessian quotient equations on exterior domains}

\author[H.G. Li]{Haigang Li}
\author[X.L. Li]{Xiaoliang Li}
\author[S.Y. Zhao]{Shuyang Zhao}

\address[H.G. Li]{School of Mathematical Sciences\\
Beijing Normal University \\
	Beijing, 100875\\
	P.R. China}
\email{{\tt  hgli@bnu.edu.cn}}

\address[X.L. Li]{School of Mathematical Sciences\\
Beijing Normal University \\
	Beijing, 100875\\
	P.R. China}
\email{{\tt rubiklixiaoliang@163.com} \ (Corresponding author)}

\address[S.Y. Zhao]{School of mathematical Sciences\\
Peking University\\
Beijing, 100871\\
P.R. China}
\email{{\tt 1701210096@pku.edu.cn}}

\keywords{Hessian quotient equation, Exterior Dirichlet problem, Existence and uniqueness, Perron's method}

\begin{abstract}
It is well-known that a celebrated J\"{o}rgens-Calabi-Pogorelov theorem for Monge-Amp\`ere equations states that any classical (viscosity) convex solution of $\det(D^2u)=1$ in $\mathbb{R}^n$ must be a quadratic polynomial. Therefore, it is an interesting topic to study the existence and uniqueness theorem of such fully nonlinear partial differential equations' Dirichlet problems on exterior domains with suitable asymptotic conditions at infinity. As a continuation of the works of Caffarelli-Li for Monge-Amp\`ere equation and of Bao-Li-Li for $k$-Hessian equations, this paper is devoted to the solvability of the exterior Dirichlet problem of Hessian quotient equations $\sigma_k(\lambda(D^2u))/\sigma_l(\lambda(D^2u))=1$ for any $1\leq l<k\leq n$ in all dimensions $n\geq 2$. By introducing the concept of generalized symmetric subsolutions and then using the Perron's method, we establish the existence theorem for viscosity solutions, with prescribed asymptotic behavior which is close to some quadratic polynomial at infinity. 
\end{abstract}
\maketitle
\section{Introduction}\label{sec:intro}
In this paper, we study the solvability of the exterior Dirichlet problem for Hessian quotient equations
\begin{equation}\label{eq:pro}
\left\{
\begin{array}{ll}
S_{k,l}(D^2u):=\frac{\sigma_k\left(\lambda\left(D^2u\right)\right)}{\sigma_l\left(\lambda\left(D^2u\right)\right)}=1,\quad& in \quad\mathbb{R}^n\setminus\overline{D},\\
u=\varphi, & on\quad\partial D,
\end{array}
\right.
\end{equation}
where $D$ is a bounded open set in $\mathbb{R}^n$, $n\geq2$, $1\leq l<k\leq n$, and $\lambda\left(D^2u\right)$ denotes the eigenvalues $\lambda=(\lambda_1,\cdots,\lambda_n)$ of the Hessian matrix of $u$. Here 
$$\sigma_k(\lambda):=\sum_{1\leq i_1<\cdots<i_k\leq n}\lambda_{i_1}\cdots\lambda_{i_k}$$ is the $k$-th elementary symmetric function of $n$ variations, $k=1,\cdots,n$. This paper is a continuation to the work by Bao, Li and Li \cite{Bao-Li-Li-2014}, where the $k$-Hessian equations was considered.

 As a class of  nonlinear elliptic second-order equations of the form  $F(D^2u)=1$, \eqref{eq:pro} includes several typical cases. In particular, if $l=0$, then it is the classical Poisson equation $\Delta u=1$ when $k=1$, here we set $\sigma_0(\lambda)\equiv1$; when $2\leq k\leq n-1$, we have $k$-Hessian equations $\sigma_k\left(\lambda\left(D^2u\right)\right)=1$; and when $k=n$, it is the well-known Monge-Amp\`ere equation $\det(D^2u)=1$. If $l=1$ and $k=n=3$, then it happens to be the special Lagrangian equation $\det(D^2u)=\Delta u$ in $\mathbb{R}^3$ which originates from the study on the Lagrangian geometry \cite{Harvey-Lawson-1982}. If $l=n-1$ and $k=n$, then it is equivalent to the inverse harmonic Hessian equation $\frac{1}{\lambda_1(D^2u)}+\cdots+\frac{1}{\lambda_n(D^2u)}=1$. 
 
The traditional (interior) Dirichlet problems for these equations have been extensively studied by many mathematicians since 1950s; one can see \cite{Aleksandrov-1958, Nirenberg-1953,Calabi-1958,Cheng-Yau-1977,Caffarelli-1984,Caffarelli-1985,Trudinger-1995, Trudinger-1997,Trudinger-2008, Urbas-1988,Urbas-1990,Wang-2009} and the references therein. Focusing on the topics of exterior Dirichlet problem, there has been many significant progresses in recent years. We would like to mention the work of Caffarelli and Li \cite{Caffarelli-Li-2003} on Monge-Amp\`ere equation, which is an extension of celebrated J\"{o}rgens-Calabi-Pogorelov theorem (\cite{Jorgens-1954, Calabi-1958, Pogorelov-1972, Cheng-Yau-1986, Jost-2001,Caffarelli-1995}) stating that any classical (viscosity) convex solution of $\det(D^2u)=1$ in $\mathbb{R}^n$ must be a quadratic polynomial. They showed that any convex viscosity solution of $\det(D^2u)=1$ in exterior domains of $\mathbb{R}^n, n\geq 3$ satisfies
\begin{equation}
\label{eq:C-Li}
\limsup_{|x|\to\infty}\left(|x|^{n-2}\left|u(x)-\left(\frac12x^TAx+b\cdot x+c\right)\right|\right)<\infty,
\end{equation}
for some symmetric positive definite matrix $A$ with $\det A=1$. In terms of prescribed asymptotic behavior \eqref{eq:C-Li}, they also established the existence and uniqueness theorem to the exterior Dirichlet problem. In $\mathbb{R}^2$, the similar problem was studied in \cite{Philippe-1992,Ferrer-1999, Ferrer-2000,Bao-Li-2012}. Very recently, Li and Lu \cite{Li-Lu-2018} completed the characterization of the existence and nonexistence of solutions with \eqref{eq:C-Li}. 

For $k$-Hessian equations and Hessian quotient equations \eqref{eq:pro}, unlike the Monge-Amp\`ere case, they are not invariant under affine transformations. When $A$ in the prescribed asymptotic condition is a diagonal matrix, the corresponding existence theory in an exterior domain was investigated in \cite{Dai-Bao-2011, Bao-Li-2013}. As for the general positive definite matrix $A$, using a famliy of generlized symmetric subsolutions, Bao, Li and Li \cite{Bao-Li-Li-2014} successfully generalized the existence results of Monge-Amp\`ere equation \cite{Caffarelli-Li-2003} to the exterior Dirichlet problem of $k$-Hessian equations. Lately, Li, Li and Yuan \cite{Li-Li-Yuan-2019} studied the asymptotics \eqref{eq:C-Li} for convex solutions of $2$-Hessian equations and of inverse harmonic equations in exterior domains. We would like to point out that the Liouville type result for global solutions to \eqref{eq:pro} with $k=n$ and $l<n$ has been discussed by Bao et al. \cite{Bao-2003}.

As a continuation of \cite{Bao-Li-Li-2014}, in this paper, we shall establish the existence theorem of the exterior Dirichlet problem for Hessian quotient equations \eqref{eq:pro}, with some prescribed asymptotic behavior at infinity. We follow the notations and definitions in \cite{Caffarelli-1985,Trudinger-1995}.

 First, we say that a function $u\in C^2(\mathbb{R}^n\setminus\overline{D})$ is admissible (or $k$-convex) if $\lambda(D^2u)\in\overline{\Gamma}_k$ in $\mathbb{R}^n\setminus\overline{D}$, where $\Gamma_k$ is the connected component of $\{\lambda\in\mathbb{R}^n|\sigma_k(\lambda)>0\}$ containing 
$$\Gamma^{+}=\{\lambda\in\mathbb{R}^n~|~\lambda_i>0, i=1,\cdots,n\}.$$ Moreover,
$$\Gamma_k=\{\lambda\in\mathbb{R}^n~|~\sigma_j(\lambda)>0, 1\leq j\leq k\}.$$
Then we use $\mathrm{USC}(\Omega)$ and $\mathrm{LSC}(\Omega)$ to respectively denote the set of upper and lower semicontinuous real valued functions on $\Omega\subset\mathbb{R}^n$. The definition of viscosity solution of Hessian quotient equations is given by following.

\begin{Def}
A function $u\in \mathrm{USC}(\mathbb{R}^n\setminus\overline{D})$ is said to be a viscosity subsolution (supersolution) of equation \eqref{eq:pro} (or say that $u$ satisfies $S_{k,l}(D^2u)\geq(\leq)1$ in the viscosity sense), if for any $k$-convex function $\psi\in C^2(\mathbb{R}^n\setminus\overline{D})$ and point $\bar{x}\in \mathbb{R}^n\setminus\overline{D}$  satisfying $$\psi(\bar{x})=u(\bar{x})\quad\mathrm{and}\quad \psi\geq(\leq)u,$$ we have $$S_{k,l}(D^2\psi(\bar{x}))\geq(\leq)1.$$
A function $u\in C^0(\mathbb{R}^n\setminus\overline{D})$ is said to be a viscosity solution of \eqref{eq:pro} if it is both a viscosity subsolution and supersolution of \eqref{eq:pro}.
\end{Def}

\begin{Def}
Let $\varphi\in C^0(\partial D)$. A function $u \in \mathrm{USC}(\mathbb{R}^n\setminus D)$ $(u \in \mathrm{LSC}(\mathbb{R}^n\setminus D))$ is said to be a viscosity subsolution (supersolution) of problem \eqref{eq:pro}, if $u$ is a viscosity subsolution (supersolution) satisfying equation \eqref{eq:pro} in $\mathbb{R}^n\setminus\overline{D}$ and $u\leq (\geq)\varphi$ on $\partial D$. A function $u\in C^0(\mathbb{R}^n\setminus D)$ is said to be a viscosity solution of \eqref{eq:pro} if it is both a subsolution and a supersolution.
\end{Def}

We introduce the concept of generalized symmetric solutions to \eqref{eq:pro} in the following sense.
\begin{Def}
For a diagonal matrix $A=\mathrm{diag}(a_1,a_2,\cdots,a_n)$, we call $u$ a G-Sym (generalized symmetric) function with respect to $A$ if it is a function of $s=\frac12x^TAx=\frac12\sum_{i=1}^na_ix_i^2$, that is, $$u(x)=u(s):=u(\frac12x^TAx).$$ If $u$ is a solution of Hessian quotient equation \eqref{eq:pro} and is also a G-Sym function with respect to $A$, we say that $u$ is a G-Sym solution of \eqref{eq:pro}.
\end{Def}
 
Let 
\begin{align*}\mathcal{A}_{k,l}:=\Big\{A~|~A ~\mbox{is a real $n\times n$ symmetric }&\mbox{positive definite matrix,}~ \\&\mbox{with }~\sigma_k(\lambda(A))=\sigma_l(\lambda(A))~\Big\}.
\end{align*} Note that $c^*(k,l)I\in\mathcal{A}_{k,l}$, where 
\begin{equation}\label{c*}
c^*(k,l):=\left(C_n^l/C_n^k\right)^{\frac{1}{k-l}},
\end{equation} and
$$C_n^k=\frac{n!}{(n-k)!k!},\quad\,C_n^l=\frac{n!}{(n-l)!l!}$$ are two binomial coefficients. In order to avoid the abuse of symbol $\lambda$, we use $a:=(a_1,\cdots,a_n)$ to denote the eigenvalues $\lambda(A)$ in what follows. Clearly, for a diagonal matrix $A=\mathrm{diag}(a_1,\cdots,a_n)\in\mathcal{A}_{k,l}$ and any real constant $\mu$, it is obvious that
\begin{equation}\label{eq:linear-s}
w(s)=s+\mu, \quad\mbox{where}~ s=\frac12x^TAx=\frac12\sum_{i=1}^{n}a_{i}x_{i}^{2}
\end{equation}
is a G-Sym solution of \eqref{eq:pro} and satisfies $w''(s)\equiv0$. Except \eqref{eq:linear-s}, for looking for more G-Sym solutions of \eqref{eq:pro}, we have the following rigidity result.
\begin{Pro}\label{pro:general-solution}
Let $A=\mathrm{diag}(a_1,a_2,\cdots,a_n)\in\mathcal{A}_{k,l}$, $0\leq l<k\leq n$, and $0<\alpha<\beta<\infty$. Then there exists an $w\in C^2(\alpha,\beta)$ with $w''\not\equiv 0$ in $(\alpha,\beta)$, such that $w(x)=w(\frac12\sum_{i=1}^na_ix_i^2)$ is a G-Sym solution of equation \eqref{eq:pro} in $\{x\in\mathbb{R}^n|\alpha<\frac12\sum_{i=1}^na_ix_i^2<\beta\}$, if and only if \begin{equation}\label{rigid}
l=0~\mbox{and}~ k=n,\quad\mbox{or}\quad a_1=a_2=\cdots=a_n=c^*(k,l),
\end{equation}
where $c^*(k,l)=\left(C_n^l/C_n^k\right)^{\frac{1}{k-l}}$ is definded by \eqref{c*}.
\end{Pro}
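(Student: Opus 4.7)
The plan is to express the Hessian of a G-Sym profile as a rank-one update of a diagonal matrix, apply the standard rank-one formula for $\sigma_k$, and extract from $\sigma_k/\sigma_l = 1$ the algebraic compatibility that forces the dichotomy \eqref{rigid}.

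For $u(x) = w(s)$ with $s = \tfrac12 x^T A x$, direct differentiation gives
\begin{equation*}
D^2 u = w'(s)\, A + w''(s)\, (Ax)(Ax)^\top,
\end{equation*}
a rank-one perturbation of the diagonal matrix $w' A$. Applying the identity
\begin{equation*}
\sigma_k(D + v v^\top) = \sigma_k(d) + \sum_{i=1}^n v_i^2\, \sigma_{k-1}(d;\hat{i})
\end{equation*}
for any diagonal $D = \mathrm{diag}(d_1, \dots, d_n)$, where $\sigma_m(a;\hat{i})$ denotes the $m$-th elementary symmetric polynomial of $(a_1,\dots,a_n)$ with $a_i$ omitted (and $\sigma_{-1} \equiv 0$ by convention), I obtain
\begin{equation*}
\sigma_k(D^2 u) = (w')^{k-1}\Bigl[ w' \sigma_k(a) + w'' \sum_{i=1}^n a_i^2 x_i^2\, \sigma_{k-1}(a;\hat{i}) \Bigr],
\end{equation*}
together with the analogous formula for $\sigma_l$.

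For the necessity of \eqref{rigid}, fix $s_0$ with $w''(s_0) \neq 0$ and set $T_i := a_i x_i^2$. Using $\sigma_k(a) = \sigma_l(a)$ from $A \in \mathcal{A}_{k,l}$, the equation $\sigma_k(D^2 u) = \sigma_l(D^2 u)$ becomes affine in $T$ and must hold on the simplex $\{T_i \geq 0,\ \sum T_i = 2 s_0\}$; this forces the $T_i$-coefficients to agree across all $i$. The cleanest simplification uses $a_i\, \sigma_m(a;\hat{i}) = \sigma_{m+1}(a) - \sigma_{m+1}(a;\hat{i})$ together with the telescoping identity
\begin{equation*}
\sigma_{m+1}(a;\hat{j}) - \sigma_{m+1}(a;\hat{i}) = (a_i - a_j)\, \sigma_m(a;\hat{i}\hat{j}),
\end{equation*}
which collapses the requirement to
\begin{equation*}
(a_i - a_j)\Bigl[ (w'(s_0))^{k-l}\, \sigma_{k-1}(a;\hat{i}\hat{j}) - \sigma_{l-1}(a;\hat{i}\hat{j}) \Bigr] = 0 \quad \text{for all } i, j.
\end{equation*}
If some pair satisfies $a_i \neq a_j$, the bracket must vanish for every such $s_0$. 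Since $\sigma_m$ of the $n{-}2$ positive numbers $(a;\hat{i}\hat{j})$ is strictly positive for $0 \leq m \leq n{-}2$ and vanishes for $m = -1$ or $m \geq n{-}1$, the only way to avoid forcing $(w'(s_0))^{k-l}$ to equal a positive $s_0$-independent constant---which would yield $w'' \equiv 0$ on the open set $\{w'' \neq 0\}$, a contradiction---is to have both $\sigma_{k-1}(a;\hat{i}\hat{j}) = 0$ and $\sigma_{l-1}(a;\hat{i}\hat{j}) = 0$, which is possible only when $k = n$ and $l = 0$. Otherwise the $a_i$ all coincide, and $A \in \mathcal{A}_{k,l}$ pins down the common value as $c^*(k,l)$.

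For sufficiency I exhibit non-affine $w$ in each case. When $(l,k) = (0,n)$, the matrix determinant lemma together with $\det A = \sigma_n(a) = 1$ reduces the equation to
\begin{equation*}
\tfrac{d}{ds}\bigl(s(w')^n\bigr) = 1, \qquad \text{so } w'(s) = \bigl(1 + C/s\bigr)^{1/n},
\end{equation*}
which is non-affine for any $C \neq 0$. When $a_i \equiv c^*(k,l)$, $u$ is radial in a scaled coordinate and the equation reduces to a separable first-order ODE in $\phi := w'$ which admits non-constant solutions (distinct from the trivial $\phi \equiv 1$) on any sub-interval of $(0, \infty)$. The main obstacle is the necessity direction: the bookkeeping needed to turn the affine-in-$T$ constraint on the simplex into the above compact dichotomy via the symmetric-polynomial factorization.
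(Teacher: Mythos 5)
Your necessity argument is, at bottom, the same as the paper's. The Hessian decomposition $D^2u = w'A + w''(Ax)(Ax)^\top$, the rank-one formula for $\sigma_k$, and the resulting identity reproduce exactly the paper's Lemma~2.1; and the simplex/affine-in-$T$ argument, combined with the telescoping identity $\sigma_{m+1}(a;\hat{j})-\sigma_{m+1}(a;\hat{i})=(a_i-a_j)\sigma_m(a;\hat{i}\hat{j})$, arrives at precisely the paper's equation
\[
(w')^{k-1}\,(a_i-a_j)\,\sigma_{k-1;ij}(a)\;=\;(w')^{l-1}\,(a_i-a_j)\,\sigma_{l-1;ij}(a),
\]
which the paper obtains instead by evaluating the ODE at coordinate-axis points (the vertices of your simplex). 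Your ``coefficient matching on the simplex'' and their ``vertex evaluation'' are the same computation; yours is slightly more uniform, and it also handles $l=0$ directly rather than deferring it to Bao-Li-Li as the paper does.

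That said, two points need tightening. First, in the case analysis after the bracket, you only treat explicitly the scenario where $(w')^{k-l}$ is forced to equal a \emph{positive} constant. If $k=n$ and $l\geq 1$, then $\sigma_{k-1}(a;\hat{i}\hat{j})=0$ while $\sigma_{l-1}(a;\hat{i}\hat{j})>0$, so the bracket is the nonzero constant $-\sigma_{l-1}(a;\hat{i}\hat{j})$: this is an immediate contradiction, not a ``constant $w'$'' contradiction, and it is precisely the $k=n$ case the paper disposes of first. Symmetrically, if $l=0$ and $k<n$, the bracket forces $w'=0$, a constant but not a positive one; you should also require $w'(s_0)\neq 0$ at the chosen $s_0$ (the paper does, and such an $s_0$ exists because $w'$ is strictly monotone on the open set $\{w''\neq 0\}$), since otherwise the division producing $(w')^{k-l}$ is not legitimate when $l\geq 2$.

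Second, the Monge-Amp\`ere sufficiency computation is wrong. With $\det A=1$, the matrix determinant lemma gives
\[
\det(D^2u)=(w')^n+2s\,w''(w')^{n-1},
\]
which is \emph{not} $\tfrac{d}{ds}\bigl(s(w')^n\bigr)$ unless $n=2$. The correct integrating factor is $s^{n/2-1}$, giving $\tfrac{d}{ds}\bigl(s^{n/2}(w')^n\bigr)=\tfrac{n}{2}s^{n/2-1}$ and hence $w'(s)=\bigl(1+C\,s^{-n/2}\bigr)^{1/n}$, matching the family displayed in the paper's Remark~1.5. Your formula $w'(s)=(1+C/s)^{1/n}$ does not solve the equation for $n\geq 3$. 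Finally, for the case $a_i\equiv c^*(k,l)$, you assert that the reduced ODE admits nonconstant solutions but do not exhibit them; the paper's Section~2.2 (the profiles $\omega_\alpha$, which for $A=c^*I$ are genuine solutions rather than mere subsolutions) supplies the concrete family.
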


This means that for $A\in\mathcal{A}_{k,l}$, \eqref{eq:pro} in general has no G-Sym solution unless the two cases in \eqref{rigid} hold. In order to use Perron's method to establish the existence theorem for problem \eqref{eq:pro}, it suffices to obtain enough subsolutions
with appropriate properties. We construct such subsolutions which are G-Sym functions with respect to $A$. This is the main new ingredient.

\begin{Rk}We remark that Proposition \ref{pro:general-solution} is an extension of Proposition 1.4 in \cite{Bao-Li-Li-2014} for $k$-Hessian equations. In particular, the Monge-Amp\`ere equation $\det(D^2u)=1$ has following radially symmetric solutions (see \cite{Caffarelli-Li-2003, Dai-Bao-2011}):
$$\omega_n(\frac12|x|^2)=\int_1^{\frac{|x|^2}{2}}\left(1+\alpha t^{-\frac{n}{2}}\right)^{\frac{1}{n}}\,dt,\quad \alpha>0,$$
which of course is a family of G-Sym solutions for each $A\in\mathcal{A}_{n,0}$ due to the invariance of affine transformations. It is because of this, that the radially symmetric solutions play an important role in the solvability of the exterior Dirichlet problems studied by Caffarelli-Li \cite{Caffarelli-Li-2003}.  However, for $k$-Hessian equations and Hessian quotient equations, we are only allowed to assume that A is diagonal, but we cannot further assume that $A = c^{*}I$. This is the reason why we introduce the G-Sym functions.
\end{Rk}

In order to state our results precisely, we introduce some notations. For any fixed $t$-tuple $\{i_1,\cdots,i_t\}$, $1\leq t\leq n-k$,  we set
$$\sigma_{k;i_1\cdots i_t}(a)=\sigma_k(a)|_{a_{i_1}= a_{i_2}=\cdots=a_{i_t}=0}.$$
We further define
\begin{equation}\label{eq:H-h}
H_k=H_k(\lambda(A)):=\max_{1\leq i\leq n}\frac{\sigma_{k-1;i}(a)a_i}{\sigma_k(a)},\  h_l=h_l(\lambda(A)):=\min_{1\leq i\leq n}\frac{\sigma_{l-1;i}(a)a_i}{\sigma_l(a)}.
\end{equation}
Set $h_0\equiv 0$ for completeness. By Perron's method, our first principal result is as follows.

\begin{Thm}\label{thm:main-1}
Let $D$ be a smooth, bounded, strictly convex open subset of $\mathbb{R}^n$, $n\geq 3$ and let $\varphi\in C^2(\partial D)$. For any $A\in\mathcal{A}_{k,l}$ with $1\leq l<k\leq n$ and $b\in\mathbb{R}^n$, if 
\begin{itemize}
\item [(i)] $k-l\geq2$, or 
\item [(ii)] $k-l=1$ and $H_k-h_l<\frac12$,
\end{itemize}
then there exists some constant $c_*$ depending only on $n,b,A,D$ and $\|\varphi\|_{C^2(\partial D)}$, such that for every $c>c_*$ there exists a unique viscosity solution $u\in C^0(\mathbb{R}^n\setminus D)$ of \eqref{eq:pro} and 
\begin{equation}\label{eq:asym-1}
\limsup_{|x|\to\infty}\left(|x|^{\frac{k-l}{H_k-h_l}-2}\Big{|}u(x)-(\frac12x^TAx+b\cdot x+c)\Big{|}\right)<\infty.
\end{equation}  
Especially, when $k-l\geq2$, \eqref{eq:asym-1} can be written as 
\begin{equation}\label{eq:asym-2}
\limsup_{|x|\to\infty}\left(|x|^{\theta(n-2)}\Big{|}u(x)-(\frac12x^TAx+b\cdot x+c)\Big{|}\right)<\infty.
\end{equation}  
where $\theta\in (\frac{k-l-2}{n-2},1]$ is a constant depending only on $n,k,l$ and $A$.
\end{Thm}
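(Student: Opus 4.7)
The overall strategy is Perron's method adapted to the exterior Dirichlet problem, following the template of Caffarelli-Li \cite{Caffarelli-Li-2003} and Bao-Li-Li \cite{Bao-Li-Li-2014}, with the main new ingredient being a G-Sym subsolution tailored to the quotient operator $S_{k,l}$. As an initial remark, $\bar u(x) := \tfrac12 x^T A x + b\cdot x + c$ is a classical solution, hence a viscosity supersolution, because $A\in\mathcal{A}_{k,l}$ gives $S_{k,l}(A) = \sigma_k(a)/\sigma_l(a) = 1$; this already realizes the desired asymptotic polynomial exactly and serves as the global supersolution.

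The heart of the argument is the construction, for each sufficiently large $c$, of a G-Sym subsolution $\underline u_c(x) = w(s) + b\cdot x$ with $s = \tfrac12 x^T A x$. Writing $D^2 \underline u_c = w'(s) A + w''(s)(Ax)(Ax)^T$ and using the rank-one formula $\sigma_j(D + vv^T) = \sigma_j(D) + \sum_i v_i^2 \sigma_{j-1}(D;\hat i)$ for diagonal $D$, one finds
\begin{equation*}
\sigma_j(D^2 \underline u_c) = (w')^{j-1}\Bigl(w'\sigma_j(a) + w''\sum_i a_i^2 x_i^2 \sigma_{j-1;i}(a)\Bigr), \qquad j=k,\,l.
\end{equation*}
Since $\sum_i a_i x_i^2 = 2s$, the coefficient $\sum_i a_i^2 x_i^2 \sigma_{j-1;i}(a)/\sigma_j(a)$ is a convex combination (with weights $a_i x_i^2/(2s)$) of the quantities $a_i \sigma_{j-1;i}(a)/\sigma_j(a)$, hence lies between $2sh_j$ and $2sH_j$. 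Combined with the normalization $\sigma_k(a)=\sigma_l(a)$, this lets one bound $S_{k,l}(D^2 \underline u_c)$ from below by an expression depending only on $s$, $w'(s)$, $w''(s)$ and the two extremal constants $H_k$ and $h_l$. Setting that bound equal to $1$ produces a first-order ODE whose solution $w$ admits the asymptotic expansion
\begin{equation*}
w(s) = s + \mu + O\bigl(s^{1 - (k-l)/(2(H_k - h_l))}\bigr) \quad\text{as } s\to\infty,
\end{equation*}
provided $H_k - h_l < (k-l)/2$. This positivity of the decay exponent is precisely hypothesis (ii) when $k-l=1$, and, as can be checked from the algebraic definitions of $H_k,h_l$ together with $A\in\mathcal{A}_{k,l}$, is automatic (in fact $H_k-h_l<1$) when $k-l\geq 2$, which also yields the range $\theta\in(\tfrac{k-l-2}{n-2},1]$ in \eqref{eq:asym-2}.

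Given the sub/super pair, the smoothness and strict convexity of $\partial D$ together with $\varphi\in C^2(\partial D)$ allow one to construct local upper barriers at each boundary point $\xi\in\partial D$ by adding tangent paraboloids to $\bar u$; for $c$ sufficiently large, one also arranges $\underline u_c \leq \varphi \leq \bar u$ on $\partial D$. The Perron envelope
\begin{equation*}
u(x) := \sup\{v(x) : v \text{ is a viscosity subsolution of \eqref{eq:pro} with } \underline u_c \leq v \leq \bar u\}
\end{equation*}
is then a viscosity solution by the standard Ishii-type doubling argument and attains the boundary data via the barriers. Because $u$ is sandwiched between $\underline u_c$ and $\bar u$, both of which differ from the target quadratic by $O(|x|^{2-(k-l)/(H_k-h_l)})$, $u$ inherits \eqref{eq:asym-1}. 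Uniqueness follows from a comparison principle for viscosity solutions in the exterior domain applied to two solutions that share the same quadratic asymptotic profile.

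The principal difficulty is the subsolution step: by Proposition \ref{pro:general-solution}, for generic $A\in\mathcal{A}_{k,l}$ no G-Sym \emph{solution} exists, so one must first isolate the worst-case $x$-direction in $S_{k,l}(D^2 w)$ using $H_k$ and $h_l$ (with the correct sign choice for $w''$), then verify admissibility of the resulting $w$ throughout the exterior domain, and finally carry out a careful ODE asymptotic analysis to extract the sharp decay exponent. Establishing the combinatorial inequality $H_k - h_l < 1$ in case (i) — a Maclaurin-type consequence of $\sigma_k(a)=\sigma_l(a)$ — is likewise needed so that case (i) is nontrivial.
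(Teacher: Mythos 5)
Your proposal follows the paper's strategy closely: reduce to $A$ diagonal (the $b\cdot x$ term drops out of $D^2$), build G-Sym $k$-convex subsolutions via the worst-case constants $H_k$ and $h_l$, solve the first-order ODE for $v=w'$ to read off the decay exponent $1-(k-l)/(2(H_k-h_l))$, and run Perron's method between the patched subsolution and the quadratic solution $\bar u=\tfrac12 x^TAx+b\cdot x+c$. Your convex-combination bound for $\sum_i a_i^2x_i^2\sigma_{j-1;i}(a)/\sigma_j(a)$ is precisely the mechanism behind Proposition \ref{pro:omega-alpha}, and your observation that $H_k-h_l<1$ strictly (from $h_l>0$ for $l\geq1$ and $H_k\leq1$) is indeed what makes case (i) automatic — the paper cites \eqref{eq:H-h-k-l} a bit casually here, so you are right to flag it.

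One step is described incorrectly, though: the boundary barriers. The tangent paraboloids $w_\xi$ of Lemma \ref{lem:w-xi} are smooth \emph{solutions} touching $\varphi$ \emph{from below} at $\xi$, and in the paper they serve as \emph{lower} barriers — taking $\underline w=\max_\xi w_\xi$ and then $\underline u=\max\{\omega_{\alpha(c)},\underline w\}$ lifts the G-Sym subsolution (which lies well below $\varphi$ near $\partial D$) so that the patched subsolution equals $\varphi$ on $\partial D$, giving $\liminf_{x\to\xi}u\geq\varphi(\xi)$. The reverse estimate $\limsup_{x\to\xi}u\leq\varphi(\xi)$ comes from a separate \emph{harmonic} barrier $\omega^+$ with $\Delta\omega^+=0$ in $E(\bar s)\setminus\overline D$, $\omega^+=\varphi$ on $\partial D$ and $\omega^+=\max_{\partial E(\bar s)}\bar u$ on $\partial E(\bar s)$, compared to $u$ via the subharmonicity of $k$-convex subsolutions. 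Translated paraboloids $\tfrac12 x^TAx+\ell(x)$ cannot serve as upper barriers here: $\ell(x)-b\cdot x-c$ is affine and negative at $\xi$ (since $\bar u(\xi)>\varphi(\xi)$ for $c$ large), so no such paraboloid can both touch $\varphi$ from above at $\xi$ and dominate $\bar u\geq u$ on the inner boundary of a neighborhood of $\xi$. With that correction, the remaining steps (Perron envelope, sandwiching to obtain \eqref{eq:asym-1}--\eqref{eq:asym-2}, comparison for uniqueness) coincide with the paper's.
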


\begin{Rk}
When $l=0$, the asymptotics \eqref{eq:asym-2} is consistent in the well-known results. For instance, when $k=n$, for the Monge-Amp\`ere equation, see Caffarelli-Li \cite{Caffarelli-Li-2003}; when $2\leq k\leq n-1$, the Hessian equations, see Bao-Li-Li \cite{Bao-Li-Li-2014}. On the other hand, when $A=c^*(k,l)I$, then $H_k-h_l=\frac{k-l}{n}$, see \cite{Bao-Li-2013,Dai-Bao-2011}. We would like to point out that Theorem \ref{thm:main-1} is the main result of \cite{Zhao-2017}. Another independent proof is also given in \cite{Li-Li-2017}.
\end{Rk}

When $k=n$ and $l=n-1$, for the inverse harmonic equations in exterior domains, by using a different technique of constructing subsolutions, we have the following existence and uniqueness theorem to the exterior problem 
\begin{equation}\label{eq:pro-i}
\left\{
\begin{array}{ll}
S_{n,n-1}(D^2u)=1,& in \quad\mathbb{R}^n\setminus\overline{D},\\
u=\varphi, & on\quad\partial D.
\end{array}
\right.
\end{equation}

\begin{Thm}\label{thm:P-L}
Let $D$ be a smooth, bounded, strictly convex open subset in $\mathbb{R}^n$ and let $\varphi\in C^2(\partial D)$. Then for any $A\in\mathcal{A}_{n,n-1}$, $b\in\mathbb{R}^n$, 

(i) when $n\geq 3$, for each $\gamma<0$, there exists some constant $c_*$ depending only on $n,\gamma,A,b,D$ and $\|\varphi\|_{C^2(\partial D)}$, such that for every $c>c_*$,  there exists a unique viscosity solution $u\in C^0(\mathbb{R}^n\setminus D)$ of \eqref{eq:pro-i} fulfilling 
\begin{equation*}
\limsup_{|x|\to\infty}\left(|x|^{-\gamma}\left|u(x)-(\frac12x^TAx+b\cdot x+c)\right|\right)<\infty;
\end{equation*}

(ii) when $n=2$, there exists some constant $\alpha_*$ depending only on $A,b,D$ and $\|\varphi\|_{C^2(\partial D)}$, such that for every $\alpha>\alpha_*$, there exists a unique local convex solution $u\in C^\infty(\mathbb{R}^2\setminus \overline{D})\cap C^0(\mathbb{R}^2\setminus D)$ of \eqref{eq:pro-i} fulfilling 
\begin{equation*}
O(|x|^{-2})\leq u(x)-V(x)\leq M(\alpha)+O(|x|^{-2}),\quad\text{ as }|x|\to\infty,
\end{equation*}  
where 
\begin{gather*}
V(x)=\frac{1}{2}x^TAx+b\cdot x+\alpha\ln\sqrt{x^T(A-I)x}+c(\alpha),
\end{gather*}
 and $M(\alpha), c(\alpha)$ are functions of $\alpha$.
\end{Thm}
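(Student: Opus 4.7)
The plan is to apply Perron's method on the exterior domain. The key observation is that $V_{0}(x):=\frac{1}{2}x^{T}Ax+b\cdot x+c$ is already a classical solution of \eqref{eq:pro-i} in all of $\mathbb{R}^{n}$ whenever $A\in\mathcal{A}_{n,n-1}$, since $D^{2}V_{0}=A$ satisfies $\sigma_{n}(A)=\sigma_{n-1}(A)$; thus one needs only to produce a viscosity solution patching $V_{0}$ to the boundary datum $\varphi$ with the prescribed asymptotic rate. This case lies outside the scope of Theorem \ref{thm:main-1}, because $k-l=1$ combined with $\sum_{i}1/a_{i}=1$ generally violates the hypothesis $H_{n}-h_{n-1}<\frac{1}{2}$, so a new family of G-Sym subsolutions must be constructed.

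I would first specialize the Hessian computation to a G-Sym ansatz $u(x)=w(s)+b\cdot x$ with $s=\frac{1}{2}x^{T}Ax$, giving
\[
D^{2}u \,=\, w'(s)\,A + w''(s)\,(Ax)(Ax)^{T}.
\]
Applying the rank-one update identity $\sigma_{k}(D+vv^{T})=\sigma_{k}(D)+\sum_{i}v_{i}^{2}\sigma_{k-1;i}(D)$, together with the algebraic facts $a_{i}^{2}\sigma_{n-1;i}(a)=a_{i}\sigma_{n}(a)$ and $\sigma_{n-1}(a)=\sigma_{n}(a)$, the subsolution inequality $S_{n,n-1}(D^{2}u)\geq 1$ collapses to an algebraic inequality in $w'(s)$, $w''(s)$, $s$ and the inhomogeneity $P(x):=\sum_{i}a_{i}^{2}x_{i}^{2}\sigma_{n-2;i}(a)$. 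Using the elementary bound $P(x)\leq C(A)\,s$, the question of a G-Sym subsolution reduces to a pure ODE inequality in $s$ up to a controllable error.

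For part (i), I would test the corrector family $\bar w_{\gamma}(s)=s+\mu\,s^{\gamma/2}$ with $\gamma<0$ and a parameter $\mu$ to be fixed, so that the corresponding function $\bar u_{\gamma}$ satisfies $\bar u_{\gamma}-V_{0}=\mu\,s^{\gamma/2}=O(|x|^{\gamma})$ at infinity; plugging in gives $\bar w_{\gamma}'-1=O(s^{\gamma/2-1})$ and $\bar w_{\gamma}''=O(s^{\gamma/2-2})$, and collecting leading-order terms reduces the inequality to a sign condition on $\mu$ depending only on $n,A,\gamma$, so that an appropriate choice of $\mu$ yields a strict subsolution in $\{|x|\geq R(\gamma,A)\}$. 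For part (ii), the relation $1/a_{1}+1/a_{2}=1$ forces $A-I$ to be positive definite, and I would verify directly (via the matrix-determinant lemma after writing $D^{2}V=A+\alpha B/r^{2}-2\alpha(Bx)(Bx)^{T}/r^{4}$ with $B=A-I$, $r^{2}=x^{T}Bx$) that the log ansatz $V(x)=\frac{1}{2}x^{T}Ax+b\cdot x+\alpha\ln\sqrt{x^{T}(A-I)x}+c(\alpha)$ satisfies $\sigma_{2}/\sigma_{1}(D^{2}V)=1$ for a suitable $c(\alpha)$, supplying the natural 2D analogue of the barrier.

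With the sub- and supersolutions in hand, the Perron envelope $u(x):=\sup v(x)$ over viscosity subsolutions $v$ of \eqref{eq:pro-i} with $v\leq\varphi$ on $\partial D$ and $v\leq V_{0}$ at infinity produces the desired solution by standard arguments: the upper-semicontinuous envelope is a subsolution, a bump perturbation yields the supersolution property, and the boundary datum is attained via a barrier exploiting the strict convexity of $D$. The threshold $c_{*}$ arises from requiring $V_{0}$ to dominate $\varphi$ on $\partial D$; the two-sided asymptotic is then obtained by squeezing $u$ between the sub-barrier $V_{0}+\mu\,s^{\gamma/2}$ and $V_{0}$ itself (and analogously the log-adjusted pair in 2D), while uniqueness follows from a global comparison principle using the decay at infinity. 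The main obstacle is the subsolution construction in part (i): because $P(x)$ is not a function of $s$ alone, the parameter $\mu$ and the admissible region $\{|x|\geq R\}$ must be chosen delicately so that the reduced inequality holds for every $\gamma<0$ with constants depending only on the prescribed data.
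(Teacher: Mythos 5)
Your approach to part (i) has a gap that cannot be repaired within your framework. You propose the G-Sym ansatz $w(s)=s+\mu s^{\gamma/2}$ and claim the subsolution inequality ``reduces to a sign condition on $\mu$.'' It does not: for a G-Sym function, the subsolution property must hold at every $x$ on the level set $\{\tfrac12 x^TAx=s\}$, and the worst-case direction forces you to bound $\sum_i\sigma_{k-1;i}(a)(a_ix_i)^2$ from above by $H_k\sigma_k(a)\cdot 2s$ (and the $l$-term from below by $h_l\sigma_l(a)\cdot 2s$). After dividing by $(w')^{n-2}$, the reduced inequality is $w'(w'-1)\geq -2w''s(H_nw'-h_{n-1})$. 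With $w'-1\sim\frac{\gamma\mu}{2}s^{\gamma/2-1}$ and $w''\sim\frac{\gamma\mu}{2}(\gamma/2-1)s^{\gamma/2-2}$, the parameter $\mu$ cancels at leading order and you are left with the parameter-free condition $\tfrac12\geq(1-\gamma/2)(H_n-h_{n-1})$, i.e.\ $\gamma\geq 2-\tfrac{1}{H_n-h_{n-1}}$. When $H_n-h_{n-1}\geq\tfrac12$ (which does occur for matrices in $\mathcal{A}_{n,n-1}$, e.g.\ $\mathrm{diag}(5/3,5,5)$), this requires $\gamma\geq 0$, contradicting $\gamma<0$. So the power-law G-Sym subsolution simply does not exist for the matrices that Theorem~\ref{thm:P-L}(i) is designed to handle, and no choice of $\mu$ or of the radius $R$ fixes this -- which is precisely the obstruction the paper's Appendix documents.

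The paper's proof sidesteps the G-Sym machinery entirely in part (i). The key observation you are missing is the Legendre duality: $A\in\mathcal{A}_{n,n-1}$ is equivalent to $\mathrm{tr}(A^{-1})=1$, and under the Legendre transform $D^2\bar u=(D^2u)^{-1}$, so $S_{n,n-1}(D^2u)\geq 1$ iff $\Delta\bar u\leq 1$. One then builds explicit \emph{Poisson} supersolutions $\bar u_{\alpha,\gamma,c}(y)=\tfrac12 y^TA^{-1}y-c\pm\alpha|y|^\gamma$ (sign depending on whether $\gamma\gtrless 2-n$), extends them to be uniformly convex, and Legendre-transforms back to get convex subsolutions $u_{\alpha,\gamma,c}$ of \eqref{eq:pro-i} with the decay $u_{\alpha,\gamma,c}(x)=\tfrac12 x^TAx+c+O(|x|^\gamma)$. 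Gluing to the boundary barrier then needs Lemma~\ref{lem:Dai-s} (a $k$-convex cutoff inside $\hat D$) rather than your envelope of paraboloids alone. You should replace your Step 2 for part (i) with this duality construction. For part (ii), your idea of verifying the log ansatz directly via the matrix-determinant lemma is in principle viable, but the paper's route is cleaner: write $u=w+\tfrac12|x|^2$, note that $\det(A-I)=1$ for $A\in\mathcal{A}_{2,1}$, observe that $S_{2,1}(D^2u)=1$ is \emph{equivalent} to $\det(D^2w)=1$, and invoke the known 2D exterior Monge--Amp\`ere result of Bao--Li, which already supplies both existence and the logarithmic asymptotics.
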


\begin{Rk}
Here the restriction $H_{n}-h_{n-1}<\frac12$ is not required. The reason why we do not establish the existence for other cases $k-l=1$ and $H_k-h_l\geq\frac12$ is purely technical, and more discussions can be refered to the Appendix.
\end{Rk}

The remainder of this paper is organized as follows. As explained in \cite{Bao-Li-Li-2014}, in order to prove Theorems \ref{thm:main-1} and \ref{thm:P-L}, we only need to show that the assertion holds when $A$ is a diagonal matrix and $b=0$ by making use of an orthogonal transformation and by subtracting a linear function from $u$; but we cannot further assume that $A=c^*(k,l)I$ because the Hessian quotient equations are not invariant under affine transformations, like the Monge-Amp\`ere equation \cite{Caffarelli-Li-2003}. 
In the next section, we first demonstrate that the Hessian quotient equations \eqref{eq:pro} with $1\leq l<k\leq n$ do not have G-Sym solutions with respect to $A$ for every $A\in\mathcal{A}_{k,l}$ unless $A=c^*(k,l)I$, see Proposition \ref{pro:general-solution}. Based on this rigidity property and to apply the Perron's method,  the heart of our proof is to construct appropriate subsolutions which are G-Sym functions with respect to $A\in\mathcal{A}_{k,l}$ and verify certain asymptotic behaviors at infinity.  Proposition \ref{pro:omega-alpha} provides us a family of G-Sym $k$-convex subsolutions of \eqref{eq:pro} with respect to $A$ for every $A\in\mathcal{A}_{k,l}$, no matter whether $H_k-h_l<\frac{k-l}{2}$ or not. Then by using Perron's method, we prove Theorem \ref{thm:main-1} and Theorem \ref{thm:P-L} in Section \ref{sec:3} and Section \ref{sec:4}, respectively. At last, in the Appendix,  
we give several examples with $H_k-h_l=\frac12$ in the phase space of eigenvalues $\lambda(A)$ and explain more why we are not able to apply the Perron's method to build the existence result of viscosity solutions for problem \eqref{eq:pro} when $k-l=1$ and $H_k-h_l\geq \frac12$ with $k<n$. To solve it, new technique is needed.

\section{G-Sym solutions and G-Sym subsolutions}\label{sec:2} 
This section is mainly concerned with the G-Sym solutions and subsolutions of equation \eqref{eq:pro} with respect to $A\in\mathcal{A}_{k,l}$, $0\leq l<k\leq n$.  First, we  show that for a diagonal matrix $A\in\mathcal{A}_{k,l}$, in general, there is no G-Sym solution, analogous to that in \cite{Bao-Li-Li-2014} using similar techniques. Then we utilize the $H_k$ and $h_l$ defined in \eqref{eq:H-h} to construct a family of G-Sym $k$-convex subsolutions in $\mathbb{R}^n\setminus\{0\}$. This will be important to prove Theorem \ref{thm:main-1} in the next section.

We start with recalling some elementary properties of $\sigma_k(a)$. Suppose $1\leq l\leq k\leq n$ and $a=(a_1,\cdots, a_n)$ with $a_i>0$, $i=1,\cdots,n$. Then 
\begin{gather}
\sigma_k(a)=\sigma_{k;i}(a)+a_i\sigma_{k-1;i}(a), \quad\forall~i,\label{eq:sigma-k}\\
\sum_{i=1}^na_i\sigma_{k-1;i}(a)=k\sigma_{k}(a),\label{eq:k-sigma}\\
\sigma_l(a)\sigma_{k-1;i}(a)\geq \sigma_{l-1;i}(a)\sigma_k(a),\quad\forall~i. \label{eq:sigma-l-k}
\end{gather}

\subsection{G-Sym solutions and the proof of Proposition \ref{pro:general-solution}}

We here make use of the idea of Bao-Li-Li \cite{Bao-Li-Li-2014} to prove Proposition \ref{pro:general-solution}. The following lemma is also needed.

\begin{Lem}[Lemma 1.3 in \cite{Bao-Li-Li-2014}]\label{lem:D^2-omega}
For any $A=\mathrm{diag}(a_1,\cdots,a_n)$, if $w\in C^2(\mathbb{R}^n)$ is a G-Sym function with respect to $A$, then, with $a:=(a_1,\cdots,a_n)$,
\begin{equation}\label{eq:D^2-omega}
\sigma_k(\lambda(D^2w))=\sigma_k(a)(w')^k+w''(w')^{k-1}\sum_{i=1}^n\sigma_{k-1;i}(a)(a_ix_i)^2.
\end{equation}
\end{Lem}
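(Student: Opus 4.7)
The plan is to reduce the claim to a direct algebraic computation by exploiting the rank-one structure of $D^2w$. First, since $w$ depends on $x$ only through $s=\tfrac12\sum_i a_ix_i^2$, the chain rule gives $\partial_i w=w'(s)\,a_ix_i$ and
\[
\partial_{ij}w \;=\; w''(s)\,a_ix_i\,a_jx_j \;+\; w'(s)\,a_i\delta_{ij}.
\]
Setting $v:=Ax=(a_1x_1,\dots,a_nx_n)^T$, this is exactly the rank-one perturbation
\[
D^2w \;=\; w'(s)\,A \;+\; w''(s)\,vv^T.
\]

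Next, I would use the standard identity $\sigma_k(\lambda(M))=\sum_{|S|=k}\det(M_S)$, where $M_S$ denotes the principal $k\times k$ submatrix of the symmetric matrix $M$ indexed by $S\subset\{1,\dots,n\}$. For each such $S$, the submatrix is $M_S=w'A_S+w''v_Sv_S^T$, still a rank-one update of a diagonal matrix. Expanding $\det(M_S)$ by multilinearity of the determinant in rows, any contribution using two or more rows drawn from $w''v_Sv_S^T$ vanishes (those rows are all proportional to $v_S^T$). Consequently, exactly the $0$-row and $1$-row contributions survive, yielding
\[
\det(M_S) \;=\; (w')^k\prod_{j\in S}a_j \;+\; w''(w')^{k-1}\sum_{i\in S}v_i^2\prod_{j\in S,\,j\ne i}a_j,
\]
where I used that with a diagonal matrix, replacing row $i$ of $A_S$ by $v_S^T$ leaves a determinant equal to $v_i\prod_{j\in S,\,j\ne i}a_j$ (the cofactor expansion along that row collapses to the $j=i$ term, since every other entry sits in a column whose complementary submatrix contains an all-zero row).

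Finally, I would sum over $S$ and interchange the order of summation. The first term gives $(w')^k\sum_{|S|=k}\prod_{j\in S}a_j=(w')^k\sigma_k(a)$. For the second, swapping sums,
\[
\sum_{|S|=k}\sum_{i\in S}v_i^2\prod_{j\in S,\,j\ne i}a_j \;=\; \sum_{i=1}^n v_i^2 \sum_{\substack{|S|=k\\ S\ni i}}\prod_{j\in S,\,j\ne i}a_j \;=\; \sum_{i=1}^n v_i^2\,\sigma_{k-1;i}(a),
\]
since the inner sum ranges over $(k-1)$-subsets of $\{1,\dots,n\}\setminus\{i\}$. Substituting $v_i=a_ix_i$ produces exactly the right-hand side of \eqref{eq:D^2-omega}. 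There is no real obstacle here; the only point requiring care is the vanishing of all multilinear terms involving two or more rank-one rows, which is what makes the formula linear rather than polynomial in $w''$. Since this lemma is stated verbatim as Lemma 1.3 of \cite{Bao-Li-Li-2014}, one could alternatively cite it directly and omit the computation.
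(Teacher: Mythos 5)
Your proof is correct. Note, however, that the present paper does not actually prove this lemma---it only records the statement with the bracketed attribution ``Lemma 1.3 in \cite{Bao-Li-Li-2014}'' and relies on that citation---so there is no in-paper argument to compare against. Your computation, via the rank-one decomposition $D^2w=w'A+w''(Ax)(Ax)^T$, the identity $\sigma_k(\lambda(M))=\sum_{|S|=k}\det(M_S)$, and multilinearity in the rows, is the natural direct proof and supplies a self-contained justification that the paper omits. Two points you glossed over but that do hold: the sign of the surviving cofactor is $+1$ because the position of $i$ within the ordered index set $S$ is the same for the deleted row and the deleted column, and the identity $\sigma_k(\lambda(M))=\sum_{|S|=k}\det(M_S)$ requires $M$ to be symmetric (or at least diagonalizable with a full set of eigenvalues), which it is here since $D^2w$ is a real symmetric matrix.
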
  

\begin{proof}[Proof of Proposition \ref{pro:general-solution}]
When $l=0$ and $1\leq k\leq n$, for $k$-Hessian equations case, Proposition \ref{pro:general-solution} has been proved in \cite{Bao-Li-Li-2014}.
We here consider the Hessian quotient cases with $1\leq l<k\leq n$.

Given a fixed $s\in(\alpha,\beta)$ such that $w'(s)\neq 0$ and $w''(s)\neq 0$, take some integer $1\leq i\leq n$ and let $$x=(0,\cdots,0,\sqrt{2s/a_i},0,\cdots,0).$$ 

 
 From \eqref{eq:D^2-omega}, we see that the G-Sym solution $w(s)$ of \eqref{eq:pro} satisfies the following ordinary equation
\begin{equation*}
\sigma_k(a)(w')^k+2sw''(w')^{k-1}\sigma_{k-1;i}(a)a_i=\sigma_l(a)(w')^l+2sw''(w')^{l-1}\sigma_{l-1;i}(a)a_i.
\end{equation*}
We rewrite it as 
$$\frac{(w')^l-(w')^k}{2sw''}=(w')^{k-1}\frac{\sigma_{k-1;i}(a)a_i}{\sigma_k(a)}-(w')^{l-1}\frac{\sigma_{l-1;i}(a)a_i}{\sigma_l(a)}.$$
Notice that the left hand side of the above equality depends only on $s$ and is independent of $i$, so we have, for any $i\neq j$,
\begin{align*}
&(w')^{k-1}\frac{\sigma_{k-1;i}(a)a_i}{\sigma_k(a)}-(w')^{l-1}\frac{\sigma_{l-1;i}(a)a_i}{\sigma_l(a)}\\
={}&(w')^{k-1}\frac{\sigma_{k-1;j}(a)a_j}{\sigma_k(a)}-(w')^{l-1}\frac{\sigma_{l-1;j}(a)a_j}{\sigma_l(a)}.
\end{align*}
That is,
$$
 (w')^{k-1}\frac{\sigma_{k-1;i}(a)a_i-\sigma_{k-1;j}(a)a_j}{\sigma_k(a)}=(w')^{l-1}\frac{\sigma_{l-1;i}(a)a_i-\sigma_{l-1;j}(a)a_j}{\sigma_l(a)}.
$$
By using \eqref{eq:sigma-k} and $\sigma_k(a)=\sigma_l(a)$, we have
\begin{equation}\label{eq:w-i-j}
(w')^{k-1}[(a_i-a_j)\sigma_{k-1;ij}(a)]=(w')^{l-1}[(a_i-a_j)\sigma_{l-1;ij}(a)].
\end{equation}

If $k=n$, 
then $\sigma_{k-1;ij}(a)=0$ for any $i\neq j$. Due to $\sigma_{l-1;ij}(a)>0$, we immediately get $a_i=a_j$ from \eqref{eq:w-i-j}. Thus, by arbitrariness of $i$ and $j$, we see $a_1=a_2=\cdots=a_n=c^*(n,l)$. 

Now, we consider the case $1\leq l<k\leq n-1$. Assume for contradiction that $a_i\neq a_j$ for some $i\neq j$. From \eqref{eq:w-i-j}, it follows that
$$(w')^{k-1}\sigma_{k-1;ij}(a)=(w')^{l-1}\sigma_{l-1;ij}(a).$$
So that,
\begin{equation}\label{eq:i-j-C-s}
\frac{\sigma_{k-1;ij}(a)}{\sigma_{l-1;ij}(a)}=(w'(s))^{l-k}.
\end{equation}
Since the left side of \eqref{eq:i-j-C-s} is independent of $s$, we deduce that $w'(s)$ is a constant, which leads to $w''(s)\equiv 0$, a contradiction. Consequently, $a_1=a_2=\cdots=a_n=c^*(k,l)$.
\end{proof}

\subsection{G-Sym subsolutions}
By Proposition \ref{pro:general-solution}, it is only possible to find some   G-Sym smooth $k$-convex  subsolutions of equation \eqref{eq:pro} for any $A\in\mathcal{A}_{k,l}$. As before, we let $A=\mathrm{diag}(a_1,\cdots,a_n)\in\mathcal{A}_{k,l}$ and denote $\lambda(A)=(a_1,\cdots, a_n):=a$. We have $a_i>0$ $(i=1,\cdots,n)$ and $\sigma_k(a)=\sigma_l(a)$. Recall \eqref{eq:H-h}, 
$$H_k=\max_{1\leq i\leq n}\frac{\sigma_{k-1;i}(a)a_i}{\sigma_k(a)}, \quad h_l=\min_{1\leq i\leq n}\frac{\sigma_{l-1;i}(a)a_i}{\sigma_l(a)},$$ 
 which indicates from \eqref{eq:sigma-k} and \eqref{eq:k-sigma} that, for all $0\leq l<k\leq n$,
$$\frac{k}{n}\leq H_k\leq1,\quad 0\leq h_l\leq\frac{l}{n},$$
and 
\begin{equation}\label{eq:H-h-k-l}
\frac{k-l}{n}\leq H_k-h_l\leq1.
\end{equation}
In the following, we set 
$$\mathcal{H}_{k,l}:=\frac{k-l}{2(H_k-h_l)},$$ for simplicity.

Now, using Lemma \ref{lem:D^2-omega} and recalling $s=\frac12x^TAx=\frac12\sum_{i=1}^na_ix_i^2$, we study the following ordinary equation
\begin{equation}\label{eq:ordinary-eq}
\left\{
\begin{array}{ll}
(w')^k+2w''(w')^{k-1}H_ks-(w')^l-2w''(w')^{l-1}h_ls=0, & s>0,\\
w'(s)>0, w''(s)<0.
\end{array}
\right.
\end{equation}
By setting $v(s)=w'(s)$, then
$$v^k+v'v^{k-1}H_k2s-v^l-v'v^{l-1}h_l2s=0.$$
Multiplying it by $\mathcal{H}_{k,l}v^{\frac{h_lk-H_kl}{H_k-h_l}}s^{\mathcal{H}_{k,l}-1}$ on both sides, we can rewrite it as
\begin{align*}
&\mathcal{H}_{k,l}v^{2\mathcal{H}_{k,l}H_k}s^{\mathcal{H}_{k,l}-1}+2\mathcal{H}_{k,l}H_kv'v^{2\mathcal{H}_{k,l}H_k-1}s^{\mathcal{H}_{k,l}}\\
=&\mathcal{H}_{k,l}v^{2\mathcal{H}_{k,l}h_l}s^{\mathcal{H}_{k,l}-1}+2\mathcal{H}_{k,l}hv'v^{2\mathcal{H}_{k,l}h_l-1}s^{\mathcal{H}_{k,l}}.
\end{align*}
Then, integrating it on both sides with respect to $s$, we see that
$$
v^{2\mathcal{H}_{k,l}H_k}s^{\mathcal{H}_{k,l}}=v^{2\mathcal{H}_{k,l}h_l}s^{\mathcal{H}_{k,l}}+\alpha,
$$
that is,
\begin{equation}\label{eq:v-k-l-H-h-s}
(v^{k-l}-1)v^{2\mathcal{H}_{k,l}h_l}=\alpha s^{-\mathcal{H}_{k,l}},
\end{equation}
where $\alpha$ is an arbitrary constant. Given $\alpha>0$, after differentiating \eqref{eq:v-k-l-H-h-s}, it is easy to find that there exists a solution $v_\alpha(s)>1$, such that 
\begin{equation}\label{eq:d-v-alpha-s}
\frac{v'_\alpha}{v_\alpha}\left(H_kv_\alpha^{2\mathcal{H}_{k,l}H_k}-h_lv_\alpha^{2\mathcal{H}_{k,l}h_l}\right)=-\frac{\alpha}{2s^{1+\mathcal{H}_{k,l}}}.
\end{equation}
This indicates that $v'_\alpha(s)<0$. Therefore, for any $\alpha>0$, the ordinary equation \eqref{eq:ordinary-eq} has a family of solutions  
\begin{align}
\omega_\alpha(s)&=\beta+\int_{\bar{s}}^sv_\alpha(t)\,dt\notag\\
&=\beta+s-\bar{s}+\int_{\bar{s}}^s\left(v_\alpha(t)-1\right)\,dt, \label{eq:ordinary-solution}
\end{align}
where $\beta\in\mathbb{R}$ and $\bar{s}>0$.

Next, let us characterize the asymptotic behavior of the solutions of \eqref{eq:ordinary-eq}. For $\alpha>0$, letting $s\to\infty$, we deduce by \eqref{eq:v-k-l-H-h-s} that
$$\lim_{s\to\infty}v_\alpha(s)=1,$$ and 
\begin{equation}\label{eq:O-v-s}
v_\alpha(s)-1=O(s^{-\mathcal{H}_{k,l}}), \quad\mathrm{as}\  s\to\infty.
\end{equation}
We divide below into four cases.

\textbf{Case 1.} If $H_k-h_l<\frac{k-l}{2}$, it follows from \eqref{eq:ordinary-solution} and \eqref{eq:O-v-s} that 
\begin{equation}\label{eq:omega-s-1}
\omega_\alpha(s)=s+\mu_1(\alpha)+O(s^{1-\mathcal{H}_{k,l}}),\quad s\to\infty,
\end{equation}
where $$\mu_1(\alpha)=\beta-\bar{s}+\int_{\bar{s}}^\infty(v_\alpha(t)-1)\,dt<\infty.$$ Moreover, in view of \eqref{eq:v-k-l-H-h-s},
\begin{equation*}
\frac{\partial v_\alpha(s)}{\partial\alpha}=\frac{s^{-\mathcal{H}_{k,l}}}{2\mathcal{H}_{k,l}\left(H_kv_\alpha^{2\mathcal{H}_{k,l}H_k-1}-h_lv_\alpha^{2\mathcal{H}_{k,l}h_l-1}\right)}>0,
\end{equation*}
and we can see that $\mu_1(\alpha)$ increases with respect to $\alpha$ and $\lim_{\alpha\to\infty}\mu_1(\alpha)=\infty$. In particular, when $n\geq 3$ and $k-l\geq2$, we have by \eqref{eq:H-h-k-l} that $H_k-h_l<\frac{k-l}{2}$ holds for any $A\in\mathcal{A}_{k,l}$ and 
\begin{equation*}
\frac{k-l}{2}-1<\mathcal{H}_{k,l}-1\leq\frac{n-2}{2}.
\end{equation*}
Thus, in this case, \eqref{eq:omega-s-1} can be written as 
\begin{equation}\label{eq:omega-s-2}
\omega_\alpha(s)=s+\mu_1(\alpha)+O(s^{\frac{\theta(2-n)}{2}}),\quad\theta\in(\frac{k-l-2}{n-2},1].
\end{equation}

\textbf{Case 2.} If $n=k=2$ and $l=0$, then using \eqref{eq:H-h-k-l} we have $H_2-h_0=1$. From \eqref{eq:v-k-l-H-h-s} we immediately obtain $v_\alpha=\sqrt{1+\alpha/s}$ and \eqref{eq:ordinary-solution} becomes 
\begin{align}
\omega_\alpha(s)&=\int_{\bar{s}}^s\sqrt{1+\alpha/s}\,ds+\beta\notag\\
&=\left[\sqrt{t}\sqrt{\alpha+t}+\alpha\ln\left(\sqrt{t}+\sqrt{\alpha+t}\right)\right]\Big{|}_{\bar{s}}^s+\beta\notag\\
&=s+\frac{\alpha}{2}\ln s+\mu_2(\alpha)+O(s^{-1}),\label{eq:omega-ln-2}
\end{align}
where $$\mu_2(\alpha)=\beta-\bar{s}-\frac{\alpha}{2}\ln\bar{s}+\int_{\bar{s}}^\infty\left(v_\alpha-1-\frac{\alpha}{2s}\right)\,dt<\infty.$$

\textbf{Case 3.} If $k-l=1$ and $H_k-h_l=\frac{k-l}{2}=\frac12$. By \eqref{eq:v-k-l-H-h-s} we obtain $(v_\alpha-1)v_\alpha^{2h_l}=\alpha/s$ and $v_\alpha-1=O(s^{-1})$. Since
$$\lim_{s\to\infty}\frac{v_\alpha-1-\alpha/s}{s^{-2}}=-2h_l\alpha^2<\infty.$$
which implies that $v_\alpha-1-\alpha/s=O(s^{-2}), s\to\infty$. We thereby derive from \eqref{eq:ordinary-solution} that
\begin{align}
\omega_\alpha(s) &=\beta+s-\bar{s}+\int_{\bar{s}}^s \alpha/s\,dt+\int_{\bar{s}}^s(v_\alpha-1-\alpha/s)\,dt\notag\\
&=s+\alpha\ln s+\mu_3(\alpha)+O(s^{-1}),\label{eq:omega-ln}
\end{align}
where $$\mu_3(\alpha)=\beta-\bar{s}-\alpha\ln\bar{s}+\int_{\bar{s}}^\infty(v_\alpha-1-\alpha/s)\,dt<\infty.$$

\textbf{Case 4.} If $k-l=1$ and $H_k-h_l>\frac{k-l}{2}=\frac12$, then \eqref{eq:v-k-l-H-h-s} reduces to $(v-1)v^{\frac{h_l}{H_k-h_l}}=\alpha s^{\frac{-1}{2(H_k-h_l)}}$. We easily verify that 
 $$\lim_{s\to\infty}\frac{v_\alpha-1-\alpha s^{\frac{-1}{2(H_k-h_l)}}}{1/s}=0,$$
 and $$\lim_{s\to\infty}\frac{v_\alpha-1-\alpha s^{\frac{-1}{2(H_k-h_l)}}}{1/s^2}=\infty.$$
 Hence, $v_\alpha-1-\alpha s^{\frac{-1}{2(H_k-h_l)}}=O(s^{\theta-1})$, $\theta\in (-1,0)$. Using \eqref{eq:ordinary-solution} gives
 \begin{align}
\omega_\alpha(s)&=\beta+s-\bar{s}+\int_{\bar{s}}^s \alpha s^{\frac{-1}{2(H_k-h_l)}}\,dt+\int_{\bar{s}}^s(v_\alpha-1-\alpha s^{\frac{-1}{2(H_k-h_l)}})\,dt\notag\\
&=s+\frac{\alpha}{1-\frac{1}{2(H_k-h_l)}}s^{1-\frac{1}{2(H_k-h_l)}}+\mu_4(\alpha)+O(s^\theta),\label{eq:omega-power}
\end{align}
where $$\mu_4(\alpha)=\beta-\bar{s}-\frac{\alpha}{1-\frac{1}{2(H_k-h_l)}}\bar{s}^{1-\frac{1}{2(H_k-h_l)}}+\int_{\bar{s}}^\infty(v_\alpha-1-\alpha/s)\,dt<\infty.$$

Now, we are in a position to state explicitly that a family of functions $\omega_\alpha(s)$ given by \eqref{eq:ordinary-solution} are G-Sym subsolutions of equation \eqref{eq:pro} in $\mathbb{R}^n\setminus\{0\}$. Precisely, we have
\begin{Pro}\label{pro:omega-alpha}
For $n\geq2$, $0\leq l<k\leq n$ and $\alpha>0$, let $A\in\mathcal{A}_{k,l}$ be diagonal and $\omega_\alpha(x)=\omega_\alpha(\frac12x^TAx)$ be given by \eqref{eq:ordinary-solution}. Then $\omega_\alpha$ is a smooth $k$-convex subsolution of equation \eqref{eq:pro} in $\mathbb{R}^n\setminus\{0\}$. Furthermore,

(i)  Assume $n\geq 3$. If $k-l=1$ and $H_k-h_l<\frac12$, then 
\begin{equation}\label{eq:omega-alpha-1}
\omega_\alpha(x)=\frac12x^TAx+\mu_1(\alpha)+O(|x|^{2-2\mathcal{H}_{k,l}}), \quad |x|\to\infty;
\end{equation}
if $k-l\geq2$, then
\begin{equation}\label{eq:omega-alpha-2}
\omega_\alpha(x)=\frac12x^TAx+\mu_1(\alpha)+O(|x|^{\frac{\theta(2-n)}{2}}), \quad |x|\to\infty,
\end{equation}
where  $$\theta\in\Big(\frac{k-l-2}{n-2},1\Big].$$

(ii) If $n=k=2$ and $l=0$, then
\begin{equation}\label{eq:omega-alpha-3}
\omega_\alpha(x)=\frac12x^TAx+\frac{\alpha}{2}\ln \left(\frac12x^TAx\right)+\mu_2(\alpha)+O(|x|^{-2}), \quad |x|\to\infty.
\end{equation}

(iii) If $k-l=1$ and $H_k-h_l=\frac12$, then
\begin{equation}\label{eq:omega-alpha-4}
\omega_\alpha(x)=\frac12x^TAx+\alpha\ln \left(\frac12x^TAx\right)+\mu_3(\alpha)+O(|x|^{-2}), \quad |x|\to\infty.
\end{equation}

(iv) If $k-l=1$ and $H_k-h_l>\frac12$, then, as $|x|\to\infty$,
\begin{equation}\label{eq:omega-alpha-5}
\omega_\alpha(x)=\frac12x^TAx+\frac{\alpha}{1-\frac{1}{2(H_k-h_l)}}\left(\frac12x^TAx\right)^{1-\frac{1}{2(H_k-h_l)}}+\mu_4(\alpha)+O(|x|^{2\theta}), 
\end{equation}
where $\theta\in(-1,0)$.
\end{Pro}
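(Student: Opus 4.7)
The plan is to verify smoothness and the subsolution/$k$-convexity properties by direct calculation from Lemma \ref{lem:D^2-omega}, then to read off the four asymptotic regimes from the ODE analysis already performed in the bulk. Throughout I set $v=v_\alpha=w'$, $s=\tfrac12 x^T A x$, and $\mathcal H=\mathcal H_{k,l}$. Differentiating the implicit relation \eqref{eq:v-k-l-H-h-s} in $v$ produces the factor $2\mathcal H\bigl(H_k v^{2\mathcal H H_k-1}-h_l v^{2\mathcal H h_l-1}\bigr)$, which is strictly positive on $\{v>1\}$ because $H_k>h_l$; the implicit function theorem then yields $v_\alpha\in C^\infty(0,\infty)$, while \eqref{eq:v-k-l-H-h-s} and \eqref{eq:d-v-alpha-s} furnish $v_\alpha>1$ and $v_\alpha'<0$. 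Hence $\omega_\alpha\in C^\infty(\mathbb R^n\setminus\{0\})$.

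For the subsolution inequality and $k$-convexity I apply Lemma \ref{lem:D^2-omega}: for each $j$,
\begin{equation*}
\sigma_j(\lambda(D^2\omega_\alpha))=\sigma_j(a)v^j+v'v^{j-1}\sum_i\sigma_{j-1;i}(a)(a_ix_i)^2.
\end{equation*}
Rewriting the sum as $\sum_i\bigl(a_i\sigma_{j-1;i}(a)\bigr)\,a_ix_i^2$ and using $\sum_i a_ix_i^2=2s$ yields the two-sided bound
\begin{equation*}
2sh_j\sigma_j(a)\leq \sum_i\sigma_{j-1;i}(a)(a_ix_i)^2\leq 2sH_j\sigma_j(a).
\end{equation*}
Since $v'<0$, the bounds for $j=k,l$, combined with $\sigma_k(a)=\sigma_l(a)$ and the ODE \eqref{eq:ordinary-eq}, give
\begin{equation*}
\sigma_k(D^2\omega_\alpha)-\sigma_l(D^2\omega_\alpha)\geq \sigma_k(a)\bigl[v^{k-1}(v+2sH_kv')-v^{l-1}(v+2sh_lv')\bigr]=0,
\end{equation*}
hence $S_{k,l}(D^2\omega_\alpha)\geq 1$ once $\sigma_l>0$ is confirmed. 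For $k$-convexity, the lower bound $\sigma_j(D^2\omega_\alpha)\geq v^{j-1}\sigma_j(a)(v+2sH_jv')$ holds for all $1\leq j\leq k$. Using \eqref{eq:d-v-alpha-s} and the rewrite $\alpha s^{-\mathcal H}=v^{2\mathcal H H_k}-v^{2\mathcal H h_l}$ of \eqref{eq:v-k-l-H-h-s} (from $2\mathcal H h_l+(k-l)=2\mathcal H H_k$), I compute
\begin{equation*}
v+2sH_kv'=\frac{(H_k-h_l)\,v^{2\mathcal H h_l+1}}{H_k v^{2\mathcal H H_k}-h_l v^{2\mathcal H h_l}}>0.
\end{equation*}
To propagate this to $j<k$ I establish the monotonicity $H_j\leq H_k$. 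Writing $H_j=1-\min_i\sigma_{j;i}(a)/\sigma_j(a)$, it suffices that $\sigma_{j;i}(a)\sigma_{j+1}(a)\geq \sigma_{j+1;i}(a)\sigma_j(a)$ for every $i$; expanding both sides via \eqref{eq:sigma-k} reduces the inequality to Newton's inequality $\sigma_{j;i}(a)^2\geq \sigma_{j-1;i}(a)\sigma_{j+1;i}(a)$ for the $(n-1)$ positive variables obtained by deleting $a_i$. Since $v'<0$, $H_j\leq H_k$ then yields $v+2sH_jv'\geq v+2sH_kv'>0$, and therefore $\sigma_j(D^2\omega_\alpha)\geq 0$ for all $1\leq j\leq k$.

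For the asymptotic expansions, $s$ is comparable to $|x|^2$ with constants depending only on the eigenvalues of $A$, so the four expansions \eqref{eq:omega-s-1}, \eqref{eq:omega-ln-2}, \eqref{eq:omega-ln}, \eqref{eq:omega-power} derived above in the bulk translate directly to \eqref{eq:omega-alpha-1}, \eqref{eq:omega-alpha-3}, \eqref{eq:omega-alpha-4}, \eqref{eq:omega-alpha-5}. For the equivalent formulation \eqref{eq:omega-alpha-2} I set $\theta$ so that $1-\mathcal H_{k,l}=\theta(2-n)/2$; the range $\theta\in((k-l-2)/(n-2),1]$ then follows from \eqref{eq:H-h-k-l} together with the strict inequality $H_k-h_l<1$ that holds whenever all $a_i>0$ and $(k,l)\neq(n,0)$. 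The main obstacle I expect is the $k$-convexity step, specifically the algebraic identity for $v+2sH_kv'$ and the Newton-type monotonicity $H_j\leq H_k$; the subsolution and asymptotic parts are then essentially bookkeeping on top of the profile analysis already carried out.
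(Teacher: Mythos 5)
Your proposal is correct and follows essentially the same route as the paper: both rely on Lemma \ref{lem:D^2-omega} to express $\sigma_j(\lambda(D^2\omega_\alpha))$, use the two-sided bounds $2sh_j\sigma_j(a)\leq\sum_i\sigma_{j-1;i}(a)(a_ix_i)^2\leq 2sH_j\sigma_j(a)$ with $v'<0$ to get the subsolution inequality from the ODE, establish $k$-convexity via the monotonicity $H_j\leq H_k$ together with positivity of $v+2sH_kv'$, and read off the four asymptotic regimes from the ODE expansions already computed in the section. The only stylistic differences are that you derive the explicit closed form $v+2sH_kv'=\frac{(H_k-h_l)v^{2\mathcal{H}h_l+1}}{H_kv^{2\mathcal{H}H_k}-h_lv^{2\mathcal{H}h_l}}$ whereas the paper reaches the same positivity by a short chain of inequalities from \eqref{eq:v-k-l-H-h-s}--\eqref{eq:d-v-alpha-s}, and that you re-derive $H_j\leq H_k$ from Newton's inequality on the $(n-1)$-variable truncations while the paper cites \eqref{eq:sigma-l-k} directly; both are minor variants of the same computation and either is fine.
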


\begin{Rk}

(1) We would like to remark that Li and Li  \cite{Li-Li-2017} also obtain the assertion (i), by using variable $r=\sqrt{x^TAx}$. For the case  $l=0$, $2\leq k\leq n$, the G-Sym subsolutions of $k$-Hessian equations \eqref{eq:pro} given by Proposition \ref{pro:omega-alpha} have been clarified by Bao, Li and Li in \cite{Bao-Li-Li-2014}. 

(2) In particular, if we take $A=c^*(k,l)I$,  then 
$$\frac{\sigma_{k-1;i}(a)a_i}{\sigma_k(a)}=\frac{k}{n}, \quad\frac{\sigma_{l-1;i}(a)a_i}{\sigma_l(a)}=\frac{l}{n}, \quad\forall ~i=1,2,\cdots, n,$$
and
$$H_k-h_l=\frac{k-l}{n}.$$
If $n=k=2$ and $l=0$, then $c^*(2,0)=1$, $H_2=1$. By \eqref{eq:omega-ln-2} and \eqref{eq:omega-alpha-3}, we obtain its radial symmetric solution of the Monge-Amp\`ere equation $\det(D^2u)=1$ in dimension two
\begin{equation*}
\omega_\alpha(x)=\frac12\left(|x|\sqrt{2\alpha+|x|^2}\right)+\alpha\ln\left(|x|^2+\sqrt{2\alpha+|x|^2}\right)+C,
\end{equation*}
satisfying $$\omega_\alpha(x)=\frac12|x|^2+\frac{\alpha}{2}\ln(\frac12|x|^2)+\mu_2(\alpha)+O(|x|^{-2}), \quad x\to\infty,$$
where $C$ is a constant depending on $\alpha$, see \cite{Wang-Bao-2013, Bao-Li-2012}. If $n\geq 3$, $2\leq k\leq n$ and $l=0$, then \eqref{eq:ordinary-solution} with $H_k=\frac{k}{n}$ and $h_l=0$ gives the radial symmetric solutions of $k$-Hessian equations \eqref{eq:pro} in $\mathbb{R}^n\setminus B_1$:
$$\omega_\alpha(x)=\int_1^{\frac{c^*(k,0)}{2}|x|^2}\left(1+\alpha t^{-\frac{n}{2}}\right)^{\frac{1}{k}}\,dt.$$
If $n\geq 3$ and $k=1$, then $c^*(1,0)=\frac{1}{n}$ and we obtain the radial symmetric solutions in $\mathbb{R}^n\setminus B_1$ for Poisson equation $\Delta u=1$, 
\begin{align*}
\omega_\alpha(x)&=\int_1^{\frac{1}{2n}|x|^2}\left(1+\alpha t^{-\frac{n}{2}}\right)\,dt\\
&=\frac{1}{2n}|x|^2-\frac{\alpha}{n(n-2)}|x|^{2-n}-1+\frac{2\alpha}{n-2},
\end{align*}
which implies that the radial solution of Poisson equation in $\mathbb{R}^n\setminus\{0\}$ can only be a summation of Laplace's fundamental solution $\frac{\alpha}{n(2-n)}|x|^{2-n}$ and the solution $\frac{1}{2n}|x|^2+c$ in the form \eqref{eq:linear-s}. 

\end{Rk}
 
We conclude this section by giving the proof of Proposition \ref{pro:omega-alpha}.
\begin{proof}[Proof of Proposition \ref{pro:omega-alpha}]
Clearly, \eqref{eq:omega-alpha-1}-\eqref{eq:omega-alpha-5} directly follow from \eqref{eq:omega-s-1}, \eqref{eq:omega-s-2}, \eqref{eq:omega-ln-2}, \eqref{eq:omega-ln} and \eqref{eq:omega-power}. We denote $\lambda(A)=(a_1,\cdots,a_n):=a$ throughout the proof. Since $v_\alpha'(s)=\omega_\alpha''(s)<0$, for any $1\leq m\leq k$,
\begin{align*}
\sigma_m(\lambda(D^2\omega_\alpha))&=\sigma_m(a)v_\alpha^m+v'_\alpha v_\alpha^{m-1}\sum_{i=1}^n\sigma_{m-1;i}(a)(a_ix_i)^2\\
&=\sigma_m(a)v_\alpha^{m-1}\left(v_\alpha+v'_\alpha\sum_{i=1}^n\frac{\sigma_{m-1;i}(a)}{\sigma_m(a)}(a_ix_i)^2\right)\\
&\geq \sigma_m(a)v_\alpha^{m-1}(v_\alpha+v'_\alpha H_m2s).
\end{align*}
From \eqref{eq:sigma-l-k} and \eqref{eq:A-H-h} , we know that $$H_m=\frac{\sigma_{m-1;n}(a)a_n}{\sigma_m(a)}\leq \frac{\sigma_{k-1;n}(a)a_n}{\sigma_k(a)}=H_k\quad \mathrm{for}\quad 1\le m\le k,$$  provided $a_1\le a_2\le\cdots\le a_n$. Hence,
$$\sigma_m(\lambda(D^2\omega_\alpha))\ge \sigma_m(a)v_\alpha^{m-1}(v_\alpha+v'_\alpha H_k2s).$$ 

In view of \eqref{eq:v-k-l-H-h-s} and \eqref{eq:d-v-alpha-s},
\begin{align*}
\frac{v'_\alpha}{v_\alpha}\frac{\alpha}{s^{\mathcal{H}_{k,l}}}H_k&=\frac{v'_\alpha}{v_\alpha}\left(H_kv^{2\mathcal{H}_{k,l}H_k}-H_kv^{2\mathcal{H}_{k,l}h_l}\right)\\
&\geq\frac{v'_\alpha}{v_\alpha}\left(H_kv^{2\mathcal{H}_{k,l}H_k}-h_lv^{2\mathcal{H}_{k,l}h_l}\right) =-\frac{\alpha}{2s^{\mathcal{H}_{k,l}+1}}.
\end{align*}
It shows that $v_\alpha+v'_\alpha H_k2s\geq 0$ and so $$\sigma_m(\lambda(D^2\omega_\alpha))>0, \quad x\in\mathbb{R}^n\setminus\{0\},$$ for any $1\leq m\leq k$.

 On the other hand, by the definition of $H_k,h_l$ and the fact that $\omega''_\alpha(s)<0$,
\begin{align*}
&\sigma_k(\lambda(D^2\omega_\alpha))-\sigma_l(\lambda(D^2\omega_\alpha))\\
={}&\sigma_k(a)(\omega'_\alpha)^k+\omega''_\alpha(\omega'_\alpha)^{k-1}\sum_{i=1}^n\sigma_{k-1;i}(a)(a_ix_i)^2\\
&\quad-\sigma_l(a)(\omega'_\alpha)^l-\omega''_\alpha(\omega'_\alpha)^{l-1}\sum_{i=1}^n\sigma_{l-1;i}(a)(a_ix_i)^2\\
\geq{} & \sigma_k(a)\left((\omega'_\alpha)^k+2\omega''_\alpha(\omega'_\alpha)^{k-1}H_ks-(\omega'_\alpha)^l-2\omega''_\alpha(\omega'_\alpha)^{l-1}h_ls\right)=0.
\end{align*}
Consequently, $\omega_\alpha$ is a smooth $k$-convex subsolution of \eqref{eq:pro} in $\mathbb{R}^n\setminus\{0\}$.
\end{proof}

\section{Proof of Theorem \ref{thm:main-1}}\label{sec:3}
To prove Theorem \ref{thm:main-1}, we need apply an adapted Perron's method and comparison principle for general equation $f(\lambda(D^2(u)))=1$ to Hessian quotient equations \eqref{eq:pro}, see \cite{Li-Bao-2014, Bao-Li-Li-2014} and the references therein. For convenience, we present them as follows.

\begin{Lem}\label{lem:perron-m}
Let $\Omega$ be a domain in $\mathbb{R}^n$. Assume that there exist $\underline{u}, \bar{u}\in C^0(\overline{\Omega})$ respectively to be viscosity subsolution and supersolution of \eqref{eq:pro} such that $\underline{u}\leq \bar{u}$, and $\underline{u}=\varphi$ on $\partial \Omega$. Then
$$u(x):=\sup\{v(x)|\underline{u}\leq v\leq\bar{u}\, \mathrm{in}\, \Omega\,\mathrm{and}\, v\, \mbox{is a subsolution of \eqref{eq:pro}}, \text{with } v=\varphi\,\mathrm{on}\, \partial\Omega\}$$
is the unique viscosity solution of problem \eqref{eq:pro}.
\end{Lem}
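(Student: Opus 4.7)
The plan is to apply the classical Perron scheme adapted to the admissible cone $\overline{\Gamma}_k$, using throughout the comparison principle for $S_{k,l}$ established in \cite{Li-Bao-2014}. Let $u^{*}$ and $u_{*}$ denote the upper and lower semicontinuous envelopes of $u$. Since $\underline{u}$ itself is an admissible competitor, the defining family is nonempty and bounded above by $\bar{u}$, so $u$ is well defined and $\underline{u}\le u\le\bar{u}$ in $\Omega$. The argument has three ingredients: (i) show that $u^{*}$ is a viscosity subsolution of $S_{k,l}(D^2u)\ge 1$; (ii) show that $u_{*}$ is a viscosity supersolution of $S_{k,l}(D^2u)\le 1$ tested against $k$-convex functions; (iii) invoke comparison to conclude $u^{*}\le u_{*}$, hence $u^{*}=u_{*}=u$ is continuous and the desired viscosity solution. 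Uniqueness will follow from the same comparison principle applied to any two viscosity solutions of \eqref{eq:pro} sharing the Dirichlet datum $\varphi$.

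Step (i) is purely formal: the pointwise supremum of an arbitrary family of viscosity subsolutions of a degenerate elliptic equation is a viscosity subsolution upon passage to the USC envelope. This requires only the degenerate ellipticity of $S_{k,l}$ on $\overline{\Gamma}_k$ and is a standard stability result in the viscosity framework.

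Step (ii) is the heart of the argument and I would carry it out by the classical bump construction. Suppose for contradiction that there exist $x_0\in\Omega$ and a $k$-convex $\psi\in C^2$ touching $u_{*}$ from below at $x_0$ with $S_{k,l}(D^2\psi(x_0))>1$. Perturb to $\psi_\varepsilon(x):=\psi(x)+\varepsilon-\delta|x-x_0|^2$ for small $\delta,\varepsilon>0$ on a ball $B_r(x_0)\subset\Omega$. By continuity of $S_{k,l}$ on $\overline{\Gamma}_k$, the function $\psi_\varepsilon$ remains $k$-convex and is a classical strict subsolution in $B_r(x_0)$; choosing $\varepsilon<\delta r^2/2$ forces $\psi_\varepsilon<u_{*}\le\bar{u}$ on $\partial B_r(x_0)$ while $\psi_\varepsilon(x_0)>u(x_0)$ after lifting along a sequence approaching $x_0$. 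Applying the comparison principle inside $B_r(x_0)$ between the strict classical subsolution $\psi_\varepsilon$ and the viscosity supersolution $\bar{u}$ yields $\psi_\varepsilon\le\bar{u}$ on $B_r(x_0)$. Then the cut-and-paste
\begin{equation*}
\widetilde{u}(x):=\begin{cases}\max\{u(x),\psi_\varepsilon(x)\},& x\in B_r(x_0),\\ u(x),& x\in\Omega\setminus B_r(x_0),\end{cases}
\end{equation*}
remains an admissible competitor and strictly exceeds $u$ at some point, contradicting the maximality of $u$.

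For the boundary data, every competitor $v$ in the family satisfies $v=\varphi$ on $\partial\Omega$ by construction, so $u=\varphi$ on $\partial\Omega$ pointwise, and since $\underline{u}\in C^0(\overline{\Omega})$ with $\underline{u}=\varphi$ on $\partial\Omega$, the inequalities $\underline{u}\le u\le\bar{u}$ give $u^{*}=u_{*}=\varphi$ on $\partial\Omega$; this supplies the boundary input for the comparison in step (iii). The main obstacle will be verifying the bump construction rigorously: one must keep $\psi_\varepsilon$ within the admissible cone $\overline{\Gamma}_k$ under perturbation and apply comparison between a strict classical subsolution and a merely semicontinuous supersolution $\bar{u}$, both of which are handled through the $k$-convex comparison framework in \cite{Li-Bao-2014, Bao-Li-Li-2014}.
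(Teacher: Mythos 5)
The paper gives no proof of Lemma~\ref{lem:perron-m}; it is merely stated and cited from \cite{Li-Bao-2014,Bao-Li-Li-2014}, so there is nothing in the paper to compare your argument against line-by-line. Your proposal follows the standard Ishii--Perron scheme, which is the route those references take, and its interior skeleton is sound: step~(i) is the classical envelope stability fact, and in step~(ii) your bump $\psi_\varepsilon$ does stay in the admissible cone, because $S_{k,l}(D^2\psi(x_0))>1$ forces $\sigma_k(\lambda(D^2\psi(x_0)))>\sigma_l(\lambda(D^2\psi(x_0)))\ge0$, hence $\lambda(D^2\psi(x_0))\in\Gamma_k$ (the open cone), and subtracting $\delta|x-x_0|^2$ lowers $S_{k,l}$ only slightly for small $\delta,r$.

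There is, however, a genuine gap in how you feed the boundary into step~(iii). From $\underline{u}\le u\le\bar{u}$ with $\underline{u}=\varphi$ on $\partial\Omega$ you get $u_*\ge\varphi$ on $\partial\Omega$, but you cannot conclude $u^*\le\varphi$ there. The hypotheses give only $\bar{u}\ge\varphi$ on $\partial\Omega$, so $u^*\le\bar{u}$ does not force $u^*\le\varphi$, and a pointwise supremum of upper semicontinuous competitors, each equal to $\varphi$ on $\partial\Omega$, need not have upper semicontinuous envelope $\le\varphi$ there: with $\Omega=(0,1)$, $\varphi(0)=0$, $v_n(x)=\min(nx,1)$ one has $u\equiv1$ in $(0,1)$, $u(0)=0$, $u^*(0)=1$. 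Without $u^*\le u_*$ on $\partial\Omega$, the comparison in step~(iii) cannot give $u^*\le u_*$, so continuity of $u$ and attainment of the Dirichlet data are not established. The fix, which the paper itself carries out in the proof of Theorem~\ref{thm:main-1} \emph{before} invoking Lemma~\ref{lem:perron-m}, is an upper barrier argument: every viscosity subsolution of $S_{k,l}=1$ satisfies $\Delta v\ge1>0$ in the viscosity sense, hence is subharmonic; taking a local harmonic barrier $\omega^+$ with $\omega^+=\varphi$ on $\partial\Omega$ near $\xi$ and $\omega^+\ge\bar{u}$ on the inner cap dominates every competitor $v$ and therefore $u$, yielding $u^*(\xi)\le\omega^+(\xi)=\varphi(\xi)$. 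This barrier exists under mild regularity of $\partial\Omega$ (satisfied in the paper's application, where $\partial\Omega=\partial D$ is smooth and strictly convex), and must be built into your step~(iii); without it, the argument does not close.
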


\begin{Lem}\label{lem:compar}
Let $\Omega$ be a domain in $\mathbb{R}^n$. If $u\in\mathrm{USC}(\overline{\Omega})$, $v\in\mathrm{LSC}(\overline{\Omega})$ are respectively viscosity subsolution and supersolution of \eqref{eq:pro} in $\Omega$ and $u\leq v$ on $\partial\Omega$, then $u\leq v$ in $\Omega$.
\end{Lem}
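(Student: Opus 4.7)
The plan is the standard viscosity comparison argument for degenerate-elliptic concave operators: I argue by contradiction, first perturb $u$ to a strict subsolution, then double variables in the Crandall--Ishii manner, and finally use ellipticity on the admissible cone $\overline{\Gamma}_k$ to produce a contradiction. The two structural ingredients I use are the degenerate ellipticity of $S_{k,l}$ on $\overline{\Gamma}_k$ (monotonicity in the matrix argument) and the concavity together with positive $1$-homogeneity of $S_{k,l}^{1/(k-l)}$ on $\Gamma_k$.

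Suppose for contradiction $M:=\max_{\overline{\Omega}}(u-v)>0$; the boundary condition forces this maximum to be attained at an interior point. Take the smooth strictly $k$-convex quadratic $\omega(x):=\tfrac{c^*(k,l)}{2}|x|^2$, rescale it so that $S_{k,l}^{1/(k-l)}(D^2\omega)\ge 2$, and for small $\eta>0$ set
\[
u_\eta(x):=u(x)+\eta\bigl(\omega(x)-\sup_{\partial\Omega}\omega\bigr).
\]
Then $u_\eta\le v$ on $\partial\Omega$, $\max_{\overline{\Omega}}(u_\eta-v)\ge M/2>0$, and by a standard argument (concavity plus $1$-homogeneity of $S_{k,l}^{1/(k-l)}$) $u_\eta$ is a strict viscosity subsolution with $S_{k,l}^{1/(k-l)}(D^2u_\eta)\ge 1+2\eta$. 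Now I double variables: for $\varepsilon>0$ let $(x_\varepsilon,y_\varepsilon)$ maximize $\Phi_\varepsilon(x,y):=u_\eta(x)-v(y)-\tfrac{1}{2\varepsilon}|x-y|^2$ over $\overline{\Omega}\times\overline{\Omega}$. Standard facts give $x_\varepsilon,y_\varepsilon\to\hat x\in\Omega$ and $|x_\varepsilon-y_\varepsilon|^2/\varepsilon\to 0$, while the Crandall--Ishii lemma furnishes symmetric matrices $X\le Y$ and $p=(x_\varepsilon-y_\varepsilon)/\varepsilon$ with $(p,X)\in\overline{J}^{2,+}u_\eta(x_\varepsilon)$ and $(p,Y)\in\overline{J}^{2,-}v(y_\varepsilon)$. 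The strict subsolution property at $x_\varepsilon$ yields $X\in\overline{\Gamma}_k$ and $S_{k,l}(X)\ge(1+2\eta)^{k-l}$; stability of $\overline{\Gamma}_k$ under addition of positive semidefinite matrices combined with $X\le Y$ gives $Y\in\overline{\Gamma}_k$; the supersolution property at $y_\varepsilon$ yields $S_{k,l}(Y)\le 1$; and monotonicity of $S_{k,l}$ on $\overline{\Gamma}_k$ along $X\le Y$ produces the contradiction $(1+2\eta)^{k-l}\le S_{k,l}(X)\le S_{k,l}(Y)\le 1$.

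The main obstacle is the admissibility step: the viscosity subsolution definition only constrains $u_\eta$ through $k$-convex test functions, so one has to verify that the superjet Hessian $X$ produced by Crandall--Ishii actually lies in $\overline{\Gamma}_k$ before the strict subsolution inequality can be used. This is exactly why the perturbation by a strictly admissible $\omega$ is needed---non-admissibility of $X$ would render the viscosity inequality vacuous and hence incompatible with the strict lower bound inherited from the concavity argument. The verification is routine but technical for Hessian-type operators and proceeds along the lines of the references cited in \cite{Bao-Li-Li-2014, Li-Bao-2014} for fully nonlinear concave elliptic equations. Letting $\eta\to 0$ then completes the proof.
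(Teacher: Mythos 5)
The paper does not actually prove this lemma: it is quoted as a known comparison principle from \cite{Li-Bao-2014, Bao-Li-Li-2014}, so your proposal has to stand on its own. Its skeleton (perturb to a strict subsolution using concavity and $1$-homogeneity of $S_{k,l}^{1/(k-l)}$, double variables, Crandall--Ishii, degenerate ellipticity on $\overline{\Gamma}_k$) is indeed the standard route used in those references, but as written it has two genuine gaps. First, the lemma is stated for an arbitrary domain and is applied in Section \ref{sec:3} to the unbounded domain $\mathbb{R}^n\setminus\overline{D}$ (with the additional fact $\lim_{|x|\to\infty}(\omega_{\alpha(c)}-\bar u)=0$ verified there). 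Your argument assumes $\max_{\overline{\Omega}}(u-v)$ is attained and perturbs by $\eta\omega$ with $\omega$ growing quadratically, both of which fail on an exterior domain: the supremum need not be attained, and $u_\eta-v$ becomes unbounded above near infinity, so the maximum point can escape. (Without some hypothesis at infinity the statement is in fact false on exterior domains: $u=\frac{c^*}{2}|x|^2+(|x|^2-1)$ and $v=\frac{c^*}{2}|x|^2$ on $\mathbb{R}^n\setminus B_1$ are an admissible sub/supersolution pair with $u=v$ on $\partial B_1$ but $u>v$ outside.) You must either prove the bounded case and then reduce the unbounded case to it using $\limsup_{|x|\to\infty}(u-v)\le 0$ on a large $B_R$, or say explicitly that you treat only bounded $\Omega$; at present neither is done.

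Second, the admissibility step is asserted rather than proved, and the justification you give is not the right one. With the paper's definition, the subsolution property of $u$ (hence of $u_\eta$) is only tested against $k$-convex $\psi$; in particular, if $\psi$ is a $k$-convex function touching $u_\eta$ from above, $\psi-\eta\omega$ need not be $k$-convex, so even your preliminary claim that $u_\eta$ is a strict subsolution, and later the claim that the Crandall--Ishii matrix $X$ satisfies $\lambda(X)\in\overline{\Gamma}_k$ with $S_{k,l}(X)\ge(1+2\eta)^{k-l}$, require an equivalence between the test-function definition and the semijet formulation. That equivalence is true but needs an argument: given any $C^2$ function touching $u$ from above with Hessian $B$ not in $\overline{\Gamma}_k$, one tests instead with a quadratic whose Hessian is $B+tI+\varepsilon I$ chosen so that $\lambda(B+tI)\in\partial\Gamma_k$, and uses the Maclaurin inequality to see $S_{k,l}<1$ there, contradicting the definition; this forces every superjet Hessian into $\overline{\Gamma}_k$, after which the inequality itself follows by approximating with strictly $k$-convex quadratics. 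Your sentence that ``non-admissibility of $X$ would render the viscosity inequality vacuous and hence incompatible with the strict lower bound'' is circular --- a vacuous inequality is compatible with anything --- and the perturbation $\eta\omega$ is not what yields admissibility of $X$; its role is only to create the strict gap $1+2\eta$ (and to handle the degenerate case $\sigma_l=0$ via superadditivity of $\sigma_l^{1/l}$). Supplying this jet-level equivalence (for both the sub- and supersolution sides, where for the latter one must also treat $Y\in\partial\Gamma_k$), together with the reduction for unbounded $\Omega$, would turn your sketch into a complete proof along the lines of \cite{Li-Bao-2014}.
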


 Besides, we also need the following lemma which has proved in \cite{Caffarelli-Li-2003,Bao-Li-Li-2014}.  

\begin{Lem}\label{lem:w-xi}
Let $D$ be a bounded strictly convex domain of $\mathbb{R}^n, n\geq2$, $\partial D\in C^2$, $\varphi\in C^2(\partial D)$ and let $A$ be an invertible and symmetric matrix. There exists some constant $C$, depending only on $n, \|\varphi\|_{C^2(\partial D)}$, the upper bound of $A$, the diameter and the convexity of $D$, and the $C^2$ norm of $\partial D$, such that for every $\xi\in\partial D$, there exists $\bar{x}(\xi)\in\mathbb{R}^n$ satisfying $$|\bar{x}(\xi)|\leq C\quad\text{and}\quad w_\xi<\varphi\quad\text{on}\quad \partial D\setminus\{\xi\},$$ where $$w_\xi(x)=\varphi(\xi)+\frac12\left((x-\bar{x}(\xi))^TA(x-\bar{x}(\xi))-(\xi-\bar{x}(\xi))^TA(\xi-\bar{x}(\xi))\right),\ x\in\mathbb{R}^n.$$
\end{Lem}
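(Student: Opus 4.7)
The plan is to choose $\bar{x}(\xi)$ so that $w_\xi$ matches $\varphi$ at $\xi$ to first order along the tangential directions of $\partial D$, and then sits strictly below $\varphi$ at the remaining boundary points thanks to a large translation in the inward normal direction, whose effect can be exploited via the strict convexity of $D$. First I would extend $\varphi$ to a $C^2$ function $\tilde{\varphi}$ in a neighborhood of $\partial D$ with $\|\tilde{\varphi}\|_{C^2}\leq C_0\|\varphi\|_{C^2(\partial D)}$, let $\nu(\xi)$ denote the outward unit normal to $\partial D$ at $\xi$, and set
\[
\bar{x}(\xi):=\xi-A^{-1}\bigl(\nabla\tilde{\varphi}(\xi)+\mu\,\nu(\xi)\bigr),
\]
where $\mu>0$ is a large constant to be pinned down at the end. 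Since $A$ is invertible and symmetric and $\tilde{\varphi}$ is smooth on a neighborhood of $\partial D$, the bound $|\bar{x}(\xi)|\leq \operatorname{diam}(D)+\|A^{-1}\|\bigl(\|\tilde{\varphi}\|_{C^1}+\mu\bigr)$ will be automatic once $\mu$ is chosen.

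A direct algebraic expansion using the symmetry of $A$ yields
\[
w_\xi(x)-\varphi(\xi)=\tfrac{1}{2}(x-\xi)^T A(x-\xi)+(x-\xi)\cdot A(\xi-\bar{x}(\xi)),
\]
and the defining relation for $\bar{x}(\xi)$ forces $A(\xi-\bar{x}(\xi))=\nabla\tilde{\varphi}(\xi)+\mu\,\nu(\xi)$. Combining with the uniform Taylor remainder estimate
\[
\bigl|\varphi(x)-\varphi(\xi)-\nabla\tilde{\varphi}(\xi)\cdot(x-\xi)\bigr|\leq C_\varphi\,|x-\xi|^2\quad\text{for all }x,\xi\in\partial D,
\]
which follows from $\tilde{\varphi}\in C^2$ and the compactness of $\partial D$, I would obtain
\[
w_\xi(x)-\varphi(x)\leq\bigl(\tfrac{1}{2}\|A\|+C_\varphi\bigr)|x-\xi|^2+\mu\,(x-\xi)\cdot\nu(\xi).
\]

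The crux of the argument is a uniform quadratic convexity inequality: there exists $c_0>0$, depending only on the strict convexity of $D$, the $C^2$ norm of $\partial D$, and $\operatorname{diam}(D)$, with
\[
(x-\xi)\cdot\nu(\xi)\leq -c_0\,|x-\xi|^2\qquad\text{for all }x,\xi\in\partial D.
\]
For $|x-\xi|$ small this follows from a second-order Taylor expansion of the local height function parametrizing $\partial D$ near $\xi$, together with a uniform positive lower bound on the principal curvatures guaranteed by strict convexity. For $|x-\xi|$ bounded below, strict convexity and compactness of $\partial D$ yield a uniform $\delta>0$ with $(x-\xi)\cdot\nu(\xi)\leq-\delta$, which converts to a quadratic estimate via $|x-\xi|\leq\operatorname{diam}(D)$; a standard patching stitches the two regimes together. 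Once this estimate is in hand, choosing $\mu>(\tfrac{1}{2}\|A\|+C_\varphi)/c_0$ forces $w_\xi(x)-\varphi(x)<0$ on $\partial D\setminus\{\xi\}$, while $w_\xi(\xi)=\varphi(\xi)$ holds by construction, which finishes the proof. The main technical obstacle is precisely this global version of the convexity inequality: the pointwise curvature bound is standard, but upgrading it to a single constant $c_0$ valid uniformly over all pairs $(x,\xi)\in\partial D\times\partial D$ is the real content of the argument.
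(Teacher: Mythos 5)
The paper does not prove this lemma itself --- it is quoted from Caffarelli--Li \cite{Caffarelli-Li-2003} and Bao--Li--Li \cite{Bao-Li-Li-2014} --- and your argument is essentially the standard proof used there: choose $\bar{x}(\xi)=\xi-A^{-1}\bigl(D\tilde{\varphi}(\xi)+\mu\,\nu(\xi)\bigr)$ so that, after Taylor expansion, $w_\xi-\varphi$ on $\partial D$ is $\mu\,(x-\xi)\cdot\nu(\xi)$ plus an error of size $(\tfrac12\|A\|+C_\varphi)|x-\xi|^2$, and absorb the error via the uniform convexity estimate $(x-\xi)\cdot\nu(\xi)\le -c_0|x-\xi|^2$ for $x,\xi\in\partial D$; your two-regime (near/far) derivation of that estimate and the final choice $\mu>(\tfrac12\|A\|+C_\varphi)/c_0$ are correct. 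Two minor caveats: the quadratic convexity inequality requires ``strictly convex'' to be understood in the uniform sense of principal curvatures bounded below (this is exactly what the lemma's dependence on ``the convexity of $D$'' encodes; mere absence of line segments would not suffice), and your bound on $|\bar{x}(\xi)|$ unavoidably involves $\|A^{-1}\|$ rather than only the upper bound of $A$ --- a harmless deviation from the lemma's stated dependence, since in the paper's application $A$ is a fixed matrix and the constant $c_*$ is allowed to depend on $A$.
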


 We now start to prove Theorem \ref{thm:main-1}, provided $A=\text{diag}(a_1,a_2,\cdots,a_n)$ and $b=0$. Actually, using Lemmas \ref{lem:perron-m}-\ref{lem:w-xi}, the proof is similar to that of  \cite{Bao-Li-Li-2014} for $k$-Hessian equations. For the reader's convenience, we here present it as follows.
\begin{proof}[Proof of Theorem \ref{thm:main-1}]
For $s>0$, let $$E(s):=\left\{x\in\mathbb{R}^n~|~\frac12x^TAx<s\right\}.$$ Fix $\bar{s}>0$ such that $\overline{D}\subset E(\bar{s})$. Then recalling \eqref{eq:ordinary-solution}, for $\alpha>0,\beta\in\mathbb{R}$, $$\omega_\alpha(x)=\beta+\int_{\bar{s}}^{\frac12x^TAx}v_\alpha(t)\,dt.$$
By Proposition \ref{pro:omega-alpha}, we have that if $k-l=1$ and $H_k-h_l<\frac12$ or $k-l\geq2$, then $\omega_\alpha$ is a smooth $k$-convex subsolution of \eqref{eq:pro} in $\mathbb{R}^n\setminus\{0\}$ and satisfies $$\omega_\alpha(x)=\frac12x^TAx+\mu_1(\alpha)+O(|x|^{2-2\mathcal{H}_{k,l}}), \quad |x|\to\infty.$$ Also, the function $\mu_1(\alpha)$ is increasing and satisfies
\begin{equation}\label{eq:mu-alpha}
\lim_{\alpha\to\infty}\mu_1(\alpha)=\infty.
\end{equation}

Set
\begin{gather*}
\beta:=\min\{w_{\xi}(x)~|~\xi\in\partial D, x\in \overline{E(\bar{s})}\setminus D\},\\
\hat{b}:=\max\{w_{\xi}(x)~|~\xi\in\partial D, x\in \overline{E(\bar{s})}\setminus D\}.
\end{gather*}
where $w_\xi(x)$ is given by Lemma \ref{lem:w-xi}. Clearly, there holds that
\begin{equation}\label{eq:omega-beta}
\omega_\alpha\leq\beta,\quad\mathrm{in}\ E(\bar{s})\setminus\overline{D}, \forall \alpha>0.
\end{equation}
We will fix the value of $c_*$ in the proof. First we require that $c_*>\hat{b}$. It follows that $$\mu_1(0)=\beta-\bar{s}<\beta\leq \hat{b}<c_*.$$
Thus, in view of \eqref{eq:mu-alpha}, for every $c>c_*$, There exists a unique $\alpha(c)$ such that 
\begin{equation}\label{eq:mu-alpha-c}
\mu_1(\alpha(c))=c.
\end{equation}
Set $$\underline{w}(x)=\max\{w_{\xi}(x)~|~\xi \in \partial D\}.$$
It is clear from Lemma \ref{lem:w-xi} that $\underline{w}$ is a locally Lipschitz function in $\mathbb{R}^n\setminus D$, and $\underline{w}=\varphi$ on $\partial D$. Since $w_\xi$ is a smooth convex solution of \eqref{eq:pro}, $\underline{w}$ is a viscosity subsolution of equation \eqref{eq:pro} in $\mathbb{R}^n\setminus\overline{D}$. We fix a number $\hat{s}>\bar{s}$, and then choose another number $\hat{\alpha}>0$ such that $$\min_{\partial E(\hat{s})}\omega_{\hat{\alpha}}>\max_{\partial E(\hat{s})}\underline{w}.$$ We require that $c_*$ also satisfies $c_*\geq \mu_1(\hat{\alpha})$ and fix now the value of $c_*$. 

For $c\geq c_*$, we have $\alpha(c)=\mu_1^{-1}(c)\geq\mu_1^{-1}(c_*)\geq\hat{\alpha}$, and thereby 
\begin{equation}\label{eq:omega-c-w}
\omega_{\alpha(c)}\geq \omega_{\hat{\alpha}}>\underline{w},\quad \mathrm{on}\ \partial E(\hat{s}).
\end{equation}
By \eqref{eq:omega-beta}, we have 
\begin{equation}\label{eq:omega-beta-under-w}
\omega_{\alpha(c)}\leq \beta\leq \underline{w}, \quad \mathrm{in}\ E(\bar{s})\setminus\overline{D}.
\end{equation}
Now we define for $c>c_*$, 
\begin{equation*}
\underline{u}(x)=\left\{
\begin{array}{ll}
\max\{\omega_{\alpha(c)}(x),\underline{w}(x)\}, & x\in E(\hat{s})\setminus D,\\
\omega_{\alpha(c)}(x),& x\in \mathbb{R}^n\setminus E(\hat{s}).
\end{array}
\right.
\end{equation*}
We know from \eqref{eq:omega-beta-under-w} that $$\underline{u}=\underline{w},\quad\mathrm{in}\ E(\bar{s})\setminus\overline{D},$$ and in particular 
$$\underline{u}=\underline{w}=\varphi,\quad\mathrm{on}\ \partial D.$$
It follows from \eqref{eq:omega-c-w} that $\underline{u}=\omega_{\alpha(c)}$ in a neighborhood of $\partial E(\hat{s})$. Therefore $\underline{u}$ is locally Lipschitz in $\mathbb{R}^n\setminus D$. Since both $\omega_{\alpha(c)}$ and $\underline{w}$ are viscosity subsolutions of \eqref{eq:pro} in $\mathbb{R}^n\setminus\overline{D}$, so is $\underline{u}$.

For $c>c_*$, define $$\bar{u}(x):=\frac12x^TAx+c,$$ which is a smooth convex solution of \eqref{eq:pro}. By  \eqref{eq:omega-beta-under-w}, we have
$$\omega_{\alpha(c)}\leq\beta\leq\hat{b}<c^*<\bar{u}, \quad\mathrm{on}\ \partial{D}.$$
Also, by \eqref{eq:omega-alpha-1} and \eqref{eq:omega-alpha-2} with \eqref{eq:mu-alpha-c}, 
$$\lim_{|x|\to\infty}\left(\omega_{\alpha(c)}-\bar{u}\right)=0.$$
Hence, applying Lemma \ref{lem:compar}, we deduce 
\begin{equation}\label{eq:omega-c-bar-u}
\omega_{\alpha(c)}\leq \bar{u}, \quad\mathrm{on}\ \mathbb{R}^n\setminus D.
\end{equation}
From \eqref{eq:omega-c-w} and the above, one has, for $c>c^*$,
$$w_\xi\leq\bar{u}, \quad\mathrm{on}\ \partial(E(\hat{s})\setminus D),\forall\xi\in\partial D.$$
Using Lemma \ref{lem:compar} again, we obtain
$$w_\xi\leq\bar{u},\quad\mathrm{in}\ E(\hat{s})\setminus\overline{D},\forall\xi\in\partial D.$$
Therefore, $$\underline{w}\leq\bar{u},\quad\mathrm{in}\ E(\hat{s})\setminus\overline{D}.$$
Combining with \eqref{eq:omega-c-bar-u} and the above, we have 
$$\underline{u}\leq\bar{u},\quad\mathrm{in}\ \mathbb{R}^n\setminus D. $$

For any $c>c^*$, let $\mathcal{S}_\alpha$ denote the set of $v\in \mathrm{USC}(\mathbb{R}^n\setminus D)$ which is the viscosity subsolution of \eqref{eq:pro} in $\mathbb{R}^n\setminus\overline{D}$ satisfying 
\begin{equation*}
v=\varphi\quad\mathrm{on}\ \partial D
\end{equation*}
and 
\begin{equation*}
\underline{u}\leq v\leq \bar{u}\quad\mathrm{in} \ \mathbb{R}^n\setminus D.
\end{equation*}
Apparently, $\underline{u}\in\mathcal{S}_\alpha$. Let
$$u(x):=\sup\{v(x)~|~v\in\mathcal{S}_\alpha\}, x\in \mathbb{R}^n\setminus D.$$
 We have
\begin{equation*}
u(x)\geq\underline{u}=\omega_{\alpha(c)}(x)=\frac12x^TAx+c+O(|x|^{2-\frac{k-l}{H_k-h_l}}),\quad\mathrm{as}\quad x\to\infty,
\end{equation*}
especially, if $k-l\geq 2$,  $$u(x)\geq\underline{u}=\frac12x^TAx+c+O(|x|^{\theta(2-n)}), \quad\mathrm{as}\quad x\to\infty$$
where $\theta\in(\frac{k-l-2}{n-2},1],$
and $$u(x)\leq\bar{u}(x)=\frac12x^TAx+c.$$
This indicates \eqref{eq:asym-1} and \eqref{eq:asym-2}.
 
Next, we prove that $u$ satisfies the boundary condition. It is obvious that $$\liminf_{x\to\xi}u(x)\geq\lim_{x\to\xi}\underline u(x)=\varphi(\xi), \quad\forall\xi\in\partial D.$$
So we only need to prove that $$\limsup_{x\to\xi}u(x)\leq\varphi(\xi),\quad\forall\xi\in\partial D.$$
Let $\omega^{+}\in C^2(\overline{E(\bar{s})\setminus D})$ be defined by 
\begin{equation*}
\left\{
\begin{array}{ll}
\Delta \omega^{+}=0,& in \ E(\bar{s})\setminus\overline{D}\\
\omega^{+}=\varphi, & on\ \partial D\\
\omega^{+}=\max_{\partial E(\bar{s})}\bar{u}=\bar{s}+c, & on\ \partial E(\bar{s}).
\end{array}
\right.
\end{equation*}
It is easy to see that a viscosity subsolution $v$ of \eqref{eq:pro} satisfies $\Delta v>0$ in viscosity sense. Therefore, for every $v\in\mathcal{S}_\alpha$, by $v\leq \omega^{+}$ on $\partial(E(\bar{s})\setminus D)$, we have $$v\leq\omega^{+}\quad\mathrm{in}\ E(\bar{s})\setminus\overline{D}.$$ It follows that $$u\leq \omega^{+}\quad\mathrm{in}\ E(\bar{s})\setminus\overline{D},$$ and then $$\limsup_{x\to\xi}u(x)\leq\lim_{x\to\xi}\omega^{+}(x)=\varphi(\xi),\quad\forall\xi\in\partial D.$$
 
Finally, applying the Perron's method Lemma \ref{lem:perron-m}, we find that $u\in C^0(\mathbb{R}^n\setminus D)$ is a viscosity solution of \eqref{eq:pro}. This completes our proof.
\end{proof}

We remark that Theorem \ref{thm:main-1} is also proved by Li and Li \cite{Li-Li-2017} by introducing a combersome notation $m_{k,l}$, which makes their proof looks rather long and difficult to read. Namely,

$$m_{k,l}(\lambda(A)):=\frac{k-l}{\bar{\xi}_k(\lambda(A))-\underline{\xi}_l(\lambda(A))}.$$ Denote $\lambda(A):=(a_1,\cdots,a_n)$. Then $\bar{\xi}_k$ and $\underline{\xi}_k$ respectively are
$$\bar{\xi}_k(a):=\sup_{\mathbb{R}^n\setminus\{0\}} \frac{\sum_{i=1}^n\sigma_{k-1;i}(a)a_i^2x_i^2}{\sigma_k(a)\sum_{i=1}^na_ix_i^2}$$ and $$\underline{\xi}_k(a):=\inf_{\mathbb{R}^n\setminus\{0\}} \frac{\sum_{i=1}^n\sigma_{k-1};i(a)a_i^2x_i^2}{\sigma_k(a)\sum_{i=1}^na_ix_i^2}.$$
In fact, these two quantities $\bar{\xi}_k$ and $\underline{\xi}_l$ are, respectively, equivalent to our $H_k$ and $h_l$ defined in \eqref{eq:H-h}. Indeed, it is clear that $H_k\geq \bar{\xi}_k$. On the other hand,  
$$\frac{\sum_{i=1}^n\sigma_{k-1};i(a)a_i^2x_i^2}{\sigma_k(a)\sum_{i=1}^na_ix_i^2}=\frac{\sigma_{k-1;i_0}(a)a_{i_0}}{\sigma_k(a)}$$
if we choose $x=(0,\cdots,0,1,0,\cdots 0)$ whose the $i$-th component is $1$ and the others all are $0$. Thus, taking $\frac{\sigma_{k-1;i_0}(a)a_{i_0}}{\sigma_k(a)}=H_k(a)$ implies $H_k\leq\bar{\xi}_k$. Hence, $\bar{\xi}_k=H_k$.  Likewise, we can verify $\underline{\xi}_l=h_l$. 


\section{Proof of Theorem \ref{thm:P-L}}\label{sec:4}

In this section, we continue applying the Perron's method to prove Theorem \ref{thm:P-L}. The key is by making the Legendre transform to use a family of smooth convex supersolutions of Poisson equation, rather than that given by Proposition \ref{pro:omega-alpha}, to construct new subsolutions of \eqref{eq:pro-i} possessing suitable asymptotic property at infinity. The following lemma is needed and its proof can also be found in \cite{Dai-2012}.

\begin{Lem}\label{lem:Dai-s}
Let $D$ be a smooth, bounded, strictly convex open set in $\mathbb{R}^n$ and let $\varphi\in C^2(\overline{D})$ be $k$-convex. Assume that $D'\subset\subset D$ is an open subset and $V$ is a locally bounded function in $D$. Then there exists a $k$-convex function $u\in C^2(\overline{D})$ satisfying 
\begin{equation}\label{eq:Dai-S-K-L}
\left\{
\begin{array}{ll}
S_{k,l}(D^2u)\geq 1, & x\in D,\\
u=\varphi, & x\in\partial D,\\
u<V, & x\in D'.
\end{array}
\right.
\end{equation}
\end{Lem}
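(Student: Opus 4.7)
The plan is to construct $u$ explicitly as a perturbation $u = \varphi + M\rho$ of $\varphi$ by a strictly convex defining function of $D$, with a large scale parameter $M$. Since $D$ is smooth, bounded and strictly convex, there exists $\rho \in C^2(\overline D)$ with $\rho = 0$ on $\partial D$, $\rho < 0$ in $D$, and $D^2 \rho \geq \lambda_0 I$ for some $\lambda_0 > 0$; for example, $\rho$ can be obtained as the strictly convex solution of the Monge-Amp\`ere Dirichlet problem $\det(D^2\rho) = 1$ in $D$, $\rho = 0$ on $\partial D$, which is classical on strictly convex smooth domains.

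With $u_M := \varphi + M\rho$, the boundary condition $u_M = \varphi$ on $\partial D$ is automatic. Moreover
$$D^2 u_M = D^2\varphi + M\,D^2\rho \geq \bigl(M\lambda_0 - \|D^2\varphi\|_{L^\infty(\overline D)}\bigr)\,I,$$
so for every sufficiently large $M$ the matrix $D^2 u_M$ is positive definite and hence $u_M$ is $k$-convex. To arrange $u_M < V$ on $D'$, I would use that $\overline{D'} \Subset D$ and $\rho < 0$ in $D$ together give some $\delta > 0$ with $\rho \leq -\delta$ on $\overline{D'}$, while the local boundedness of $V$ yields $\inf_{\overline{D'}} V > -\infty$. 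Enlarging $M$ so that $\sup_{\overline{D'}}\varphi - M\delta < \inf_{\overline{D'}} V$ then forces $u_M < V$ throughout $D'$.

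For the subsolution inequality $S_{k,l}(D^2 u_M) \geq 1$ in $D$, I would exploit that the eigenvalues satisfy $\lambda_i(D^2 u_M) = M\lambda_i(D^2\rho) + O(1) \geq cM$ uniformly on $\overline D$ for large $M$, so that the elementary symmetric functions admit the homogeneity-type bounds
$$\sigma_k(\lambda(D^2 u_M)) \geq c_1 M^k, \qquad \sigma_l(\lambda(D^2 u_M)) \leq c_2 M^l,$$
with positive constants $c_1, c_2$ independent of $M$. Because $k > l$, dividing gives $S_{k,l}(D^2 u_M) \geq (c_1/c_2)\,M^{k-l} \to \infty$ as $M \to \infty$, and in particular $S_{k,l}(D^2 u_M) \geq 1$ once $M$ is large enough. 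Taking $M$ large enough that all three requirements hold simultaneously and setting $u := u_M$ finishes the construction.

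The only real obstacle is the first step: producing the strictly convex defining function $\rho$ with a uniform positive lower bound on $D^2\rho$. This is precisely where the smoothness and strict convexity of $D$ are used, but it is classical (either through the Monge-Amp\`ere solvability quoted above, or more elementarily by composing a smooth convex defining function of $D$ with a rapidly increasing convex function to absorb any degeneracy of its Hessian). Once $\rho$ is in hand, the remaining argument is a purely algebraic scaling in $M$, and the hypothesis $k > l$ is exactly what ensures that the Hessian quotient $\sigma_k/\sigma_l$ blows up rather than stays bounded — which is why the same barrier construction works uniformly across all cases of the lemma.
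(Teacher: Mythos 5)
Your proposal is correct, but it takes a genuinely different route from the paper. The paper takes $v$ to be the $k$-convex solution of the interior Dirichlet problem $S_{k,l}(D^2 v)=1$ in $D$, $v=0$ on $\partial D$ (quoting Trudinger's existence theory), then sets $u=\varphi+\alpha v$ and invokes the \emph{concavity} of the operator $S_{k,l}^{1/(k-l)}$ on the $\Gamma_k$-cone to get the superadditive estimate $[S_{k,l}(D^2\varphi+\alpha D^2 v)]^{1/(k-l)} \geq [S_{k,l}(D^2\varphi)]^{1/(k-l)} + \alpha[S_{k,l}(D^2 v)]^{1/(k-l)} \geq \alpha$, where the $k$-convexity of $\varphi$ is what makes the first term nonnegative and the concavity applicable. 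You instead take a \emph{uniformly strictly convex} barrier $\rho$ (Monge--Amp\`ere defining function or a smoothed strictly convex defining function of $D$), set $u_M=\varphi+M\rho$, brute-force the Hessian to satisfy $D^2 u_M \geq cMI$ for large $M$, and then read off $\sigma_k/\sigma_l \geq (c_1/c_2)M^{k-l}\to\infty$ from the $(k-l)$-homogeneity of the quotient. Your version is more elementary in that it avoids the concavity of $S_{k,l}^{1/(k-l)}$ (a nontrivial fact, though standard), and in fact does not even use the hypothesis that $\varphi$ is $k$-convex, since for $M$ large the perturbation $M\rho$ swamps $D^2\varphi$ entirely; the price is that you need a barrier with a uniform positive lower bound on its Hessian, whereas the paper only needs $\Delta v>0$ plus the strong maximum principle to get $v\leq -v_0$ on $\overline{D'}$. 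Both constructions deliver the same conclusion; the paper's barrier is tailored to the Hessian quotient operator and reuses its interior solvability, while yours is a more generic convexity barrier and would work verbatim for any operator that is $(k-l)$-homogeneous in a suitable sense.
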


\begin{proof}
From \cite{Trudinger-1995}, we let $v\in C^2(\overline{D})$ be a $k$-convex solution of problem 
\begin{equation*}
\left\{
\begin{array}{ll}
S_{k,l}(D^2v)=1, & x\in D,\\
v=0, & x\in\partial D.
\end{array}
\right.
\end{equation*}
Since $\Delta v>0$, by the strong maximun principle, one has $v\leq -v_0$ on $\overline{D'}$ for some positive constant $v_0$. Set $$u(x)=\varphi(x)+\alpha v(x),\quad x\in D$$ where $\alpha>0$ is a constant to be determined later.  Then $u\in C^2(\overline{D})$, $u=\varphi$ on $\partial D$ and $$u=\varphi+\alpha v\leq \sup_{D'}\varphi-\alpha v_0<\inf_{D'}V\leq V\quad \mathrm{in}\ D',$$
if $\alpha$ is large enough. Next we claim that $u$ is $k$-convex and is a solution of  $$S_{k,l}(D^2u)\geq 1\quad \mathrm{in} \ D,$$ as long as $\alpha>1$. 
Since $[S_{k,l}(S)]^{\frac{1}{k-l}}$ is a concave function of the elements of the symmetric matrix $S$ whenever $\lambda(S)\in \overline{\Gamma_k}$, we obtain
$$\left[S_{k,l}\left(\frac12(D^2\varphi+\alpha D^2v)\right)\right]^{\frac{1}{k-l}}\geq \frac12\left[S_{k,l}(D^2\varphi)\right]^{\frac{1}{k-l}}+\frac12\left[S_{k,l}(\alpha D^2 v)\right]^{\frac{1}{k-l}},$$
which indicates
\begin{align*}
\left[S_{k,l}(D^2\varphi+\alpha D^2v )\right]^{\frac{1}{k-l}}&\geq [S_{k,l}(D^2\varphi)]^{\frac{1}{k-l}}+[S_{k,l}(\alpha D^2 v)]^{\frac{1}{k-l}}\\
&\geq [S_{k,l}(\alpha D^2 v)]^{\frac{1}{k-l}}.
\end{align*} 
Namely, $S_{k,l}(D^2u)\geq \alpha^{k-l}S_{k,l}(D^2 v)=\alpha^{k-l}$. Thus, if $\alpha>1$, then $S_{k,l}(D^2u)\ge1$ and our claim is true. 

As argued above, we deduce that $u=\varphi+\alpha v$ is   the solution of problem \eqref{eq:Dai-S-K-L} when $\alpha>1$ is sufficiently large.

\end{proof}

\begin{proof}[Proof of Theorem \ref{thm:P-L}]
(i) Suppose $n\geq 3$. We split our proof into three steps.

\textbf{Step\, 1.} Construct a family of smooth convex subsolutions of $$S_{n,n-1}(D^2u)=1$$
in some exterior domain.

Given $A\in\mathcal{A}_{n,n-1}$, it is easy to check that $A^{-1}\in \mathcal{A}_{1,0}$. For $\gamma<0$, let 
\begin{equation}\label{eq:u-y-poisson}
\bar{u}_{\alpha,\gamma,c}(y)=\left\{
\begin{array}{ll}
\frac12y^TA^{-1}y-c+\alpha|y|^{\gamma}, & 2-n\leq\gamma<0,\\\\
\frac12y^TA^{-1}y-c-\alpha|y|^{\gamma}, & \gamma\leq 2-n,
\end{array}
\right.
\end{equation}
where $c\in\mathbb{R}$ and $\alpha>0$.
Obviously, $\Delta\bar{u}_{\alpha,\gamma,c}\leq 1$ in $\mathbb{R}^n\setminus\{0\}$.  Moreover, after a direct computation, for some $0<\delta<\min\{\lambda(A^{-1})\}:=\Lambda\leq \frac{1}{n}$, we have 
$$D^2\bar{u}_{\alpha,\gamma,c}>\delta I,\quad\mbox{ if}~~ |y|>K,$$ where 
\begin{equation*}
K:=\left\{
\begin{array}{ll}
{\big(\frac{-\alpha\gamma}{\Lambda-\delta}\big)}^{\frac{1}{2-\gamma}}, &\mathrm{if} \ 2-n\leq\gamma<0,\\\\
{\big[\frac{\alpha\gamma(\gamma-1)}{\Lambda-\delta}\big]^{\frac{1}{2-\gamma}}}, &\mathrm{if} \ \gamma\leq 2-n. 
\end{array}
\right.
\end{equation*}
We then extend $\bar{u}_{\alpha,\gamma,c}(y)$ smoothly from  $\mathbb{R}^n\setminus B_{K+\epsilon}$ ($\epsilon>0$) to $\mathbb{R}^n$ (still denoted by $\bar{u}_{\alpha,\gamma,c}$) such that $$D^2\bar{u}_{\alpha,\gamma,c}>\delta I\quad 
\mathrm{in}\  \mathbb{R}^n.$$ 

We define the coordinate transformation 
\begin{equation}\label{eq:y-x-D-L}
y\mapsto x=D\bar{u}_{\alpha,\gamma,c}(y)=A^{-1}y+O(|y|^{\gamma-1}).
\end{equation} 
Since the Jacobian $\det D_yx=\det D^2 \bar{u}_{\alpha,\gamma,c}\neq 0$ and 
\begin{align*}
|D(\bar{u}_{\alpha,\gamma,c}(y)-\bar{u}_{\alpha,\gamma,c}(y'))|&=\left|\int_0^1D^2\bar{u}_{\alpha,\gamma,c}(y'+t(y-y'))(y-y')\,dt\right|\\
&\geq\delta|y-y'|,
\end{align*}
 for all $y,y'\in\mathbb{R}^n$. Therefore, the map \eqref{eq:y-x-D-L} is bijective. 
For any $c\in\mathbb{R}$ and $\alpha>0$, we make a Legendre transform
\begin{equation}\label{eq:legendre-t}
u_{\alpha,\gamma,c}(x)=x\cdot y(x)-\bar{u}_{\alpha,\gamma,c}(y(x)).
\end{equation}
 It is known that $y=Du_{\alpha,\gamma,c}(x)$ and $D^2\bar{u}_{\alpha,\gamma,c}=(D^2u_{\alpha,\gamma,c})^{-1}$. Setting $\tilde{D}:=D\bar{u}_{\alpha,\gamma,c}(B_{K+\epsilon})$, we thus obtain that
$$S_{n,n-1}(D^2u_{\alpha,\gamma,c})\geq1,\quad 0<D^2u_{c,\alpha}< \delta^{-1}I, \quad\mathrm{in}\ \mathbb{R}^n\setminus \tilde{D}.$$
Namely, we find that $u_{\alpha,\gamma,c}(x)$ are a family of smooth convex subsolutions of \eqref{eq:pro-i} in $\mathbb{R}^n\setminus \tilde{D}$. Furthermore, by virtue of \eqref{eq:legendre-t} with \eqref{eq:u-y-poisson} and \eqref{eq:y-x-D-L}, we derive that $y=Ax+O(|x|^{\gamma-1})$ and
\begin{equation}\label{eq:u-c-alpha-inf}
u_{\alpha,\gamma,c}(x)=\frac12x^TAx+c+O(|x|^{\gamma}),\quad |x|\to\infty.
\end{equation}

\textbf{Step\, 2.} Construct a viscosity subsolution and a viscosity supersolution to \eqref{eq:pro-i}.
  
For small $\alpha>0$ and $\epsilon>0$, we can assume that $\tilde{D}\subset D$.  Fix now such $\alpha$ and choose a $R>0$ such that $D\subset D\bar{u}_{\alpha,\gamma,c}(B_R):=\hat{D}$. Set
\begin{gather*}
\underline{w}(x)=\max\{w_{\xi}(x)~|~\xi \in \partial D\}.
\end{gather*}
where $w_{\xi}$ is given by Lemma \ref{lem:w-xi}. In view of \eqref{eq:u-y-poisson} and \eqref{eq:legendre-t}, $$\lim_{c\to+\infty} u_{\alpha,\gamma,c}(x)=+\infty,\quad x\in \partial \hat{D}.$$ So we can choose a $c_*$, depending on $\alpha, \gamma, A, D,\varphi$, such that for every $c>c_*$, 
\begin{equation}\label{eq:O-u-c-alpha}
\min_{\partial \hat{D}}u_{\alpha,\gamma,c}(x)>\max_{\partial \hat{D}}\underline{w}.
\end{equation}
Then from Lemma \ref{lem:Dai-s} there is a $n$-convex function $\tilde{u}$ fulfilling 
\begin{equation}\label{eq:Dai-s}
\left\{
\begin{array}{ll}
S_{n,n-1}(D^2\tilde{u})\geq 1, & x\in \hat{D},\\
\tilde{u}=u_{\alpha,\gamma,c}, & x\in\partial \hat{D},\\
\tilde{u}<\underline{w}, & x\in D',
\end{array}
\right.
\end{equation}
where $D\subset D'\subset\subset \hat{D}$. 

Now for every $c>c_*$, define
\begin{equation*}
\underline{u}(x)=\left\{
\begin{array}{ll}
\max\{\underline{w}(x),\tilde{u}\}, & x\in \hat{D}\setminus D,\\
u_{\alpha,\gamma,c}(x),& x\in \mathbb{R}^n\setminus\hat{D}.
\end{array}
\right.
\end{equation*}
By \eqref{eq:Dai-s}, $$\underline{u}=\underline{w},\quad\mathrm{in}\ D'\setminus D,$$ and in particular 
$$\underline{u}=\underline{w}=\varphi,\quad\mathrm{on}\ \partial D.$$
Note from \eqref{eq:O-u-c-alpha} that $\underline{u}=u_{\alpha,\gamma,c}(x)$ in a neighborhood of $\partial \hat{D}$. Therefore $\underline{u}$ is locally Lipschitz in $\mathbb{R}^n\setminus D$. Since both $\tilde{u}$ and $\underline{w}$ are viscosity subsolutions of \eqref{eq:pro-i}, so is $\underline{u}$.

Next, for $c>c_*$, we define $$\bar{u}(x):=\frac12x^TAx+c.$$ 
 Let $\max_{\partial D}\varphi \leq c_*<c\leq\bar{u}$, that is, $\underline{u}\leq \bar{u}$ on $\partial D$. By \eqref{eq:u-c-alpha-inf}, $$\lim_{|x|\to\infty}(\bar{u}-u_{\alpha,\gamma,c})=0.$$
Applying Lemma \ref{lem:compar}, we thus have for $c>c_*$,  $$\underline{u}\leq \bar{u}\quad\mathrm{in}\ \mathbb{R}^n\setminus\overline{D}.$$

\textbf{Step\, 3.} Apply the Perron's method, Lemma \ref{lem:perron-m}, to obtain the existence of viscosity solution of \eqref{eq:pro-i}. This part is the same as the proof of Theorem \ref{thm:main-1}, we thus omit that.

(ii) Suppose $n=2$. Notice that $\det(A-I)=1$ for $A\in\mathcal{A}_{2,1}$, which implies $A-I\in \mathcal{A}_{2,0}$. We consider the Monge-Amp\`ere equation:
\begin{equation}\label{eq:pro-M-w}
\left\{
\begin{array}{ll}
\det(D^2w)=1,& in \quad\mathbb{R}^2\setminus\overline{D},\\
w=\varphi-\frac12|x|^2, & on\quad\partial D.
\end{array}
\right.
\end{equation}
From Theorem 1.1 in \cite{Bao-Li-2012}, there exists some constant $\alpha^*$ depending only on $A-I,b,D$ and $\|\varphi\|_{C^2(\partial D)}$ such that for every $\alpha>\alpha^*$, there exists a unique local convex solution $w\in C^\infty(\mathbb{R}^2\setminus \overline{D})\cap C^0(\mathbb{R}^2\setminus D)$ of \eqref{eq:pro-M-w} that satisfies 
$$O(|x|^{-2})\leq w(x)-V'(x)\leq M(\alpha)+O(|x|^{-2}),\quad |x|\to\infty,$$
where 
$$V'(x)=\frac12x^T(A-I)x+b\cdot x+\alpha\ln\sqrt{x^T(A-I)x}+c(\alpha),$$
and $M(\alpha),c(\alpha)$ are functions of $\alpha$.

Now, let $u(x)=w(x)+\frac12|x|^2$. After a direct computation, we find $u(x)$ is the local convex solution of \eqref{eq:pro-i} when $n=2$. Moreover, 
$$O(|x|^{-2})\leq u(x)-V(x)\leq M(\alpha)+O(|x|^{-2}),\quad |x|\to\infty,$$
where 
$$V(x)=V'(x)+\frac12|x|^2=\frac12x^TAx+b\cdot x+\alpha\ln\sqrt{x^T(A-I)x}+c(\alpha).$$
Thus, our proof is finished.
\end{proof}


\section*{Appendix: Examples for $H_k-h_{k-1}\geq \frac12$}

To study the existence of solutions for exterior Dirichlet problem \eqref{eq:pro} by the Perron's method, the key is to find enough subsolutions of Hessian quotient equations outside a bounded domain of $\mathbb{R}^n$. From the proof of Theorem \ref{thm:main-1}, we can see that when $k-l=1$, the assumption that $H_k-h_l<\frac12$ plays a core role in characterizing the asymptotic behavior at infinity of subsolutions. When $H_k-h_{k-1}\geq \frac12$, even though we have the corresponding subsolution with specific asymptotic behavior, see Proposition \ref{pro:omega-alpha}, we still can not use the current Perron's method to build the existence of Hessian quotient equations \eqref{eq:pro} for $A\in\mathcal{A}_{k,k-1}$. 

Namely, when $k-l=1$ and $H_k-h_l<\frac{k-l}{2}$, since $\frac12x^TAx+c$ is a smooth solution (so is a supersolution) while
\begin{equation*}
\omega_\alpha(x)=\frac12x^TAx+\mu_1(\alpha)+O(|x|^{2-\frac{k-l}{H_k-h_l}}), \quad |x|\to\infty.
\end{equation*}
is a $k$-convex subsolution, see Proposition \ref{pro:omega-alpha}, we can choose a suitable constant $\alpha>0$ such that $\mu_1(\alpha)=c$ and function $\frac12x^TAx+c$ can control $\omega_\alpha(x)$ at infinity, which makes Perron's method available. However, when $H_k-h_{k-1}=\frac12$ or  $H_k-h_{k-1}>\frac12$, although we can construct some subsolutions $\omega_\alpha$ of \eqref{eq:pro}, respectively, satisfying 
\begin{equation}\label{eq:app-omega-alpha-4}
\omega_\alpha(x)=\frac12x^TAx+\alpha\ln \left(\frac12x^TAx\right)+\mu_3(\alpha)+O(|x|^{-2}),
\end{equation}
and
\begin{equation}\label{eq:app-omega-alpha-5}
\omega_\alpha(x)=\frac12x^TAx+\frac{\alpha}{1-\frac{1}{2(H_k-h_l)}}\left(\frac12x^TAx\right)^{1-\frac{1}{2(H_k-h_l)}}+\mu_4(\alpha)+O(|x|^{2\theta}), 
\end{equation}
where $\theta\in(-1,0)$,
apparently, the special solution $\frac12x^TAx+c$ is unable to control subsolution \eqref{eq:app-omega-alpha-4} and \eqref{eq:app-omega-alpha-5} at infinity, whatever $\alpha$ we choose. This is why we do not currently build the existence theorey for problem \eqref{eq:pro} with general diagonal matrix of $\mathcal{A}_{k,k-1}$ and $H_k-h_{k-1}\geq\frac12$. 

For a given $A\in \mathcal{A}_{k,l}$, let $a:=\lambda(A)=(\lambda_1,\lambda_2,\cdots,\lambda_n)$ and $0<\lambda_1\leq\lambda_2\leq\cdots\leq\lambda_n$ denoting its positive eigenvalues. By the definition of \eqref{eq:H-h}, we easily calculate  that, for $1\leq l<k\leq n$,
\begin{equation}\label{eq:A-H-h}
 H_k=\frac{\sigma_{k-1;n}(a)\lambda_n}{\sigma_k(a)},\quad h_l=\frac{\sigma_{l-1;1}(a)\lambda_1}{\sigma_l(a)}.
\end{equation}

\begin{Ex}
When $n=k=2$ and $l=1$, we have, if $A\in \mathcal{A}_{2,1}$,
\begin{gather*}
\lambda_1\lambda_2=\lambda_1+\lambda_2, \quad H_2=1, \quad h_1=\frac{1}{\lambda_2}.
\end{gather*}
This implies $\lambda_2\geq 2$ and thus $H_2-h_1\geq \frac12$. Obviously, $H_2-h_1=\frac12$ if and only if $\lambda_1=\lambda_2=c^*(2,1)=2$, provided $n=2$. 
\end{Ex}

\begin{Ex}
When $n\geq 3$, it is known that $H_k-h_{k-1}=\frac{1}{n}<\frac12$ if $A=c^*(k,k-1)I$. 
\end{Ex}

\begin{Ex}
 Let $A_1=\mathrm{diag}(2,4,4)$. By computations, we have
$$\sigma_3(A_1)=32=\sigma_2(A_1),$$
so $A_{1}\in\mathcal{A}_{3,2}$. Then by \eqref{eq:A-H-h}, we obtain
$$H_3(A_1)=1, h_2(A_1)=\frac12.$$
Thus, $H_3-h_2=\frac12$ for $A_1=\mathrm{diag}(2,4,4)$. 
\end{Ex}

\begin{Ex}
Let $A_2=\mathrm{diag}(\frac{5}{3},5,5)$. We have
$$\sigma_3(A_2)=\frac{125}{3}=\sigma_2(A_2),\  \mbox{and}\ H_3(A_2)=1, h_2(A_2)=\frac{2}{5}. $$
This implies $H_3-h_2=\frac35>\frac12$ for $A_2=\mathrm{diag}(\frac{5}{3},5,5)$.
\end{Ex}

\end{document}